\newtheorem{theorem}{Theorem}[section]
\newtheorem{corollary}[theorem]{Corollary}
\newtheorem{lemma}[theorem]{Lemma}
\newtheorem{proposition}[theorem]{Proposition}
\theoremstyle{definition}
\theoremstyle{remark}
\newtheorem{remark}[theorem]{Remark}
\numberwithin{equation}{section}
\begin{document}

\markboth{J.P. Borgna, M. De Leo, C. S\'anchez de la Vega, D.
Rial}{Lie-Trotter method for abstract semilinear \\ evolution equations.}

\title{High order time--splitting methods for irreversible equations}

\author{
Mariano De Leo\thanks{Instituto de Ciencias, Universidad Nacional
de General Sarmiento, J.M. Guti\'errez 1150 (1613) Los Polvorines,
Buenos Aires,
Argentina. Email: {\em mdeleo@ungs.edu.ar}}\,,\; 
Diego Rial\thanks{IMAS - CONICET and Departamento de Matem\'atica, Facultad de
Ciencias Exactas y Naturales, Universidad de Buenos Aires, Ciudad
Universitaria, Pabell\'on I (1428) Buenos Aires, Argentina. Email:
{\em drial@dm.uba.ar}}\,,\; \\
Constanza S\'anchez de la Vega\thanks{IMAS - CONICET and Instituto de Humanidad, Universidad Nacional de General Sarmiento, J.M. Guti\'errez 1150 (1613) Los Polvorines, Buenos Aires, Argentina. Email: {\em csfvega@dm.uba.ar}}
}

\maketitle
\begin{abstract}
{In this work, high order splitting methods of integration without negative steps are shown which can be used 
in irreversible problems, like reaction--difussion or complex Guinzburg--Landau equations.
The methods consist in a suitable affine combinations of Lie--Tortter schemes with different positive steps.
The number of basic steps for these methods grows quadratically with the order, while for symplectic methods, the growth is exponential.
Furthermore, the calculations can be performed in parallel, so that the computation time can be significantly reduced
using multiple processors.
Convergence results of these methods are proved for a large kind of semilinear problems, 
that includes reaction-difussion systems and dissipative perturbation of Hamiltonian systems.}
{splitting methods, irreversible dynamics, high order method} \\
{AMS Subject Classification: 65M12, 35Q56, 35K57}
\end{abstract}

\section{Introduction}

The goal of the present article is to derive arbitrary order splitting integrators 
for irreversible problems. We are mainly interested in dissipative 
pseudo-differentiable problems which cannot be solved neither by lines methods nor by usual
splitting integrators with negative steps.
In order to avoid negative steps, symplectic methods with complex steps are proposed in 
the literature, but in this case analytic properties on the operators are required. 
These assumptions on the operators restrict the application of this kind of methods
to reaction--diffusion type problems.

In this article we obtain integrators that, at the same time, avoid the use of negative 
steps and do not require special assumptions on the operator, as well as they exploit
the simplicity of the decomposition of the original problem. These methods can also be
applied to problems with nonlocal nonlinearities as it is shown below.  
It is possible to build arbitrary high order integrators for which the number of basic 
steps is lower than previous symplectic methods. Moreover, these methods can naturally
be parallelized.
In this work, we present a rigorous proof of the convergence of the proposed methods, 
and we also test their performance in several examples of interest.

We study the initial value problem
\begin{align}
\begin{cases}
\label{eq: ut=Au}
\partial_{t}u=A_{0}\,u+A_{1}(u),\\
u(0)=u_{0},
\end{cases}
\end{align}
where $A_{0}$ is a linear closed operator densely defined in  $D(A_{0})\subset\mathsf{H}$,
$\mathsf{H}$ is a Hilbert space, which generates a quasicontraction semi-group of operators.
We assume that the nonlinear term $A_{1}:\mathsf{H}\to\mathsf{H}$ is a smooth mapping with $A_{1}(0)=0$.
In many problems of interest, the partial equations
\begin{subequations}
\label{eq: flujos parciales}
\begin{align}
\label{eq: ut=A0u}
\partial_{t}u=&\,A_{0}\,u,\\
\label{eq: ut=A1u}
\partial_{t}u=&\,A_{1}(u),
\end{align}
\end{subequations}
can be easily solved either analytically or numerically, which enable to find approximated solutions of the problem \eqref{eq: ut=Au}
applying in turn the flows $\phi_0$ and $\phi_1$ associated to each partial problem \eqref{eq: ut=A0u} and \eqref{eq: ut=A1u} respectively.

There exist many numerical integration methods for \eqref{eq: ut=Au}
based on splitting methods, the most known are the Lie--Trotter and Strang methods defined by
\begin{align*}
\Phi_{\mathrm{Lie}}(h,u)=&\,\phi_{1}(h,\phi_{0}(h,u)), \\
\Phi_{\mathrm{Strang}}(h,u)=&\,\phi_{0}(h/2,\phi_{1}(h,\phi_{0}(h/2,u))),
\end{align*}
where $h$ is the time step of the numerical integration.
It can be proved that $\Phi_{\mathrm{Lie}}$ has order $1$ and $\Phi_{\mathrm{Strang}}$ has order $2$,
where the order $q$ represents 
the greatest natural number such that the truncation error between the real flow $\phi$ 
of the equation \eqref{eq: ut=Au} and the numerical method $\Phi$ satisfies
\[
\|\phi(h,u)-\Phi(h,u)\|_{\mathsf{H}}\leq C(u) h^{q+1}
\]
for $0<h<h_{*}$. 

A highly known example of problem \eqref{eq: ut=Au} is the nonlinear Schr\"odinger equation (NLS) 
\begin{align}
\label{eq: NLS}
\partial_{t}u=\mathrm{i}\Delta u+\mathrm{i}|u|^{2}u,
\end{align}
where the partial flows associated to each term of the equation are given by
\begin{align*}
\phi_{0}(t,u)=&\,\exp(\mathrm{i}t\Delta)u,\\
\phi_{1}(t,u)=&\,\exp(\mathrm{i}t|u|^{2})u,
\end{align*}
which represent the evolution of a free particle and self--phase modulation respectively.
This is not exactly the problem we are interested in solving since $A_0$ generates a strongly continuous group of operators, that is we are in the presence of a reversible system.
In \cite{Ruth1983}, \cite{Neri1987} and \cite{Yoshida1990}, the authors present numerical integrators for Hamiltonian systems of order $q=3,4,2n$ respectively,
which are known as symplectic integrators. The general form of this methods is the following:
\begin{align}
\label{eq: split}
\Phi_{Sym}(h)=
\phi_{1}(b_{m}h)\circ\phi_{0}(a_{m}h)\circ\cdots\circ\phi_{1}(b_{1}h)\circ\phi_{0}(a_{1}h),
\end{align}
with $a_{1}+\cdots+a_{m}=b_{1}+\cdots+b_{m}=1$.
In the pioneering work \cite{Ruth1983}, a symplectic operator $\Phi_{Sym}$ of order $3$ is presented,
taking $a_{1}=7/24$, $a_{2}=3/4$, $a_{3}=-1/24$ and $b_{1}=2/3$, $b_{2}=-2/3$, $b_{3}=1$.
In \cite{Neri1987} a symplectic operator of order $4$ is considered, where
\begin{align*}
a_{1}=&\,a_{4}=\frac{1}{2(2-2^{1/3})},\;a_{2}=a_{3}=-\frac{2^{1/3}-1}{2(2-2^{1/3})}, \\
b_{1}=&\,b_{3}=\frac{1}{2-2^{1/3}},\; b_{2}= -\frac{2^{1/3}}{2-2^{1/3}},\; b_{4}=0.
\end{align*}
In \cite{Yoshida1990}, Yoshida presents a systematic way to obtain integrators of arbitrary even order,
based on the Baker--Campbell--Hausdorff formula. These integrators can be set inductively
\[
\Phi_{Sym,2n+2}(h)=\Phi_{Sym,2n}(z_{1}h)\circ\Phi_{Sym,2n}(z_{0}h)\circ\Phi_{Sym,2n}(z_{1}h),
\]
with $z_{0}+2z_{1}=1$ and $z_{0}^{2n+1}+z_{1}^{2n+1}=0$. The total number of steps of the method of order
$q=2n$ is $S_{T}=3^{n}$.
Nevertheless, for order $q=6,8$ there can be shown symplectic integrators with $8$ and $16$ steps respectively.


In the last years, many authors started the rigorous study of the convergence of the symplectic methods
applied to Hamiltonian systems in infinite dimension.
In \cite{Besse2002} the NLS problem given by \eqref{eq: NLS} in dimension $2$ is considered and it is proved the convergence of the Lie--Trotter and Strang methods in $L^{2}(\mathbb{R}^{2})$ with order $1$ and $2$ respectively
(see also \cite{Descombes2010} and \cite{Descombes2013}).
In \cite{Lubich2008} and \cite{Gauckler2011} similar results are proved for the Gross--Pitaevskii equation given by:
\[
\mathrm{i}\partial_{t}u=-\Delta u+|x|^{2}u+|u|^{2}u,
\]
In both cases, the solutions are needed to be differentiable with respect to time,
and therefore initial data in $D(A_{0}^{k})$ 
is considered, where $A_{0}$ is the corresponding differential operator.

The symplectic methods with order $q>2$ require some step to be negative (see \cite{Goldman1996}),
inhibiting its application to irreversible problems.
In \cite{Castella2009}, the authors develop splitting methods for irreversible problems,
that use complex time steps having positive real part: going to the complex plane allows to considerably increase the 
accuracy, while keeping small time steps. The total number of steps using the so called triple jump method of order 
$q=2n$ is $S_{T}=3^{n-1}$ for order not greater than $8$ and for the quadruple jump method is 
$S_T=4\times 3^{n-2}$ for order not greater than $12$. Finally we recall that the rigorous approach given in this 
article is based upon the results for linear operators given in \cite{Hansen2009} while the nonlinear problem is only 
formally discussed.

Since our interest is focused on  irreversible pseudo--differential problems, the paradigmatic example we have in mind is the regularized cubic Schr\"odinger equation:
\begin{equation} \label{eq: pseudo diff Schroedinger}
\partial_{t}u=\mathrm{i}\Delta u-(-\Delta)^{\beta}u+\mathrm{i}|u|^{2}u,
\end{equation}
where $0<\beta<1$. 
It is natural to split the problem into the linear equation $\partial_{t}u=\mathrm{i}\Delta u-(-\Delta)^{\beta}u$
and the ordinary differential equation system given by $\dot{u}=\mathrm{i}|u|^{2}u$,
where the linear problem is ill-posed for negative times.
Note that the same procedure can be applied to nonlocal nonlinearities like 
convolution potentials as it is done in example 4.3 below (see also example 4.1 in \cite{Borgna2015}). 
Since $\mathrm{i}\Delta-(-\Delta)^{\beta}$ is a pseudo--differential operator, it can not be discretized in space
in order to use some method of lines, as Runge--Kutta schemes.
Observe that the strongly continuous semigroup generated by the 
 linear part of equation \eqref{eq: pseudo diff Schroedinger}
can not be extended to an open sector $\{z\in\mathbb{C}:|\arg(z)|<\theta\}$ 
since its spectrum is $\{-\mathrm{i}\lambda-\lambda^{\beta}:\lambda\ge 0\}\not\subseteq
\{\lambda\in\mathbb{C}:\mathrm{arg}|\lambda-\omega|\ge\pi/2+\theta\}$ for any $\omega\in\mathbb{R}$,
contrary to Hille--Yosida--Phillips theorem (see \cite{Reed1975}, theorem X.47b).
Therefore, splitting methods with complex times described in \cite{Castella2009} can not be used.
The case $\beta=1$ corresponds to the complex Ginzburg--Landau equation
(see \cite{Aranson2002} and references there):
\begin{equation}
\label{eq: Ginzburg-Landau}
\partial_{t}u=a\Delta u+b|u|^{2}u,
\end{equation}
where $a,b\in\mathbb{C}$ with $\mathrm{Re}(a)>0$. 
The spectrum of the operator $a\Delta$ is $\sigma(a\Delta)=\{-a\lambda: \lambda\ge 0\}$
and generates a strongly continuous semi-group on the open sector 
$\{z\in\mathbb{C}:|\arg(z)|<\pi/2-|\arg(a)|\}$. In \cite{Castella2009},
it is shown that the arguments of the complex steps grow with the order of the method,
exceeding the value $\pi/2-|\arg(a)|$ for order high enough.
Therefore, among integrators proposed in \cite{Castella2009}, only the low-order methods can be used. 

In this work, we present a family of splitting type methods for arbitrary order with positive time step, 
that exploit the simplicity of the partial flows in non reversible problems.
Here we describe the methods proposed:
given the associated flows $\phi_{0}, \phi_{1}$ of the partial problems, we define the maps
$\Phi^{+}(h)=\phi_{1}(h)\circ\phi_{0}(h)$, 
$\Phi^{-}(h)=\phi_{0}(h)\circ\phi_{1}(h)$ and
$\Phi_{m}^{\pm}(h)=\Phi^{\pm}(h)\circ\Phi_{m-1}^{\pm}(h)$ with $\Phi_{1}^{\pm}=\Phi^{\pm}$, and consider the following methods:
\begin{subequations}
\label{eq: metodos afines}
\begin{align}
\label{eq: metodo afin asimetrico}
\Phi(h)=&\,\sum_{m=1}^{s}\gamma_{m}\Phi_{m}^{\pm}(h/m)&\text{ (asymmetric),}&\\
\label{eq: metodo afin simetrico}
\Phi(h)=&\,\sum_{m=1}^{s}\gamma_{m}
(\Phi_{m}^{+}(h/m)+\Phi_{m}^{-}(h/m))&\text{ (symmetric)}&.
\end{align}
\end{subequations}
We will show below that under appropriated assumptions,
the integrators given by \eqref{eq: metodo afin asimetrico} and
\eqref{eq: metodo afin simetrico} are convergent with order $q$,
if $\gamma=(\gamma_1,\ldots,\gamma_{s})$ satisfies the following conditions
\begin{subequations}
\begin{align}
\label{eq: cond asim}
\begin{split}
1=&\,\gamma_{1}+\gamma_{2}+\cdots+\gamma_{s},\\
0=&\,\gamma_{1}+2^{-k}\gamma_{2}+\cdots+s^{-k}\gamma_{s},\quad 1\leq k\leq q-1,
\end{split}
\end{align}
\begin{align}
\label{eq: cond sim}
\begin{split}
\frac{1}{2}=&\,\gamma_{1}+\gamma_{2}+\cdots+\gamma_{s},\\
0=&\,\gamma_{1}+2^{-2k}\gamma_{2}+\cdots+s^{-2k}\gamma_{s},\quad 1\leq k\leq n-1,
\end{split}
\end{align}
\end{subequations}
respectively, where $2n=q$.
The first method \eqref{eq: metodo afin asimetrico} is the $h$-extrapolation of the first order Lie--Trotter splitting 
method and the second method \eqref{eq: metodo afin simetrico} is the $h^{2}$-extrapolation of the symmetrization 
of this method.
The general extrapolation technique is described in \cite{Hairer1993} and an application of these techniques applied to
classical Hamiltonian systems is shown in \cite{Chin2010}.

The possibility of computing $\Phi_{m}^{\pm}$ simultaneously,
allows to reduce significantly the total time of computation 
using multiple processors.
The total number of steps for \eqref{eq: metodo afin asimetrico} is given by
$S_{T}=2\sum_{\gamma_{m}\neq 0}m$ 
and $S_{T}=4\sum_{\gamma_{m}\neq 0}m$ for \eqref{eq: metodo afin simetrico}.
Neglecting the communication time between the processors, the total time of computation working in parallel,
turns out to be proportional to $S_{P}=2\max\limits_{\gamma_{m}\neq 0} m$ in both cases.
The system \eqref{eq: cond asim} has solution for $s\geq q$, and hence there exist methods of arbitrary order
$q$ with $S_{P}=2q$ and $S_{T}=q(q+1)$.
On the other side, the system \eqref{eq: cond sim} has solution for $s\geq n$, which shows that there exist 
integrators of arbitrary even order $q=2n$ with $S_{P}=q$ and $S_{T}=q(q/2+1)$, using the double of processors.
As it can be seen the minimum number of steps working in parallel for the symmetric
method is smaller than the corresponding one for the asymmetric method.
Also, in the examples considered below, the symmetric method presents less error than the asymmetric method.
These two latter issues pointed out justify the choice of the symmetric method over the asymmetric one. 
Even using one single processor, the total number of steps grows quadratically with the order,
while both methods presented in \cite{Yoshida1990} and \cite{Castella2009} have an exponential growth.

The paper is organized as follows:
In section \ref{sec 2} we give the basic definitions and preliminary results.
We define the stability and uniform stability bounds for an application which extend
the logarithmic norm notion given in 
\cite{Dekker1984}. Following the ideas of \cite{Besse2002}, \cite{Lubich2008} and \cite{Gauckler2011}, we consider a decreasing sequence of dense subspaces where the flows are repeatedly differentiable. 
In section \ref{sec 3} we prove consistency and stability results for the 
methods \eqref{eq: metodos afines}, from where we deduce the convergence in the standard way.
In section \ref{sec 4} we give several examples of the application of the methods to initial value problems
for ODE's and irreversible PDE's.

\section{Notation and preliminary results}
\label{sec 2}
From now on, we will denote $\phi$ the flow of the equation \eqref{eq: ut=Au}, $\phi_{0}$ and $\phi_{1}$
the flows associated to the respective partial problems $\eqref{eq: ut=A0u}$ and $\eqref{eq: ut=A1u}$.
Also, we will write $\Phi^{\pm}$ the maps defined by $\Phi^{+}(h)=\phi_{1}(h)\circ\phi_{0}(h)$, 
$\Phi^{-}(h)=\phi_{0}(h)\circ\phi_{1}(h)$ and $\Phi_{m}^{\pm}(h)=\Phi^{\pm}(h)\circ\Phi_{m-1}^{\pm}(h)$ 
with $\Phi_{1}^{\pm}=\Phi^{\pm}$.
Finally, we will use the letter $\Phi$ for the numerical integrators given by \eqref{eq: metodo afin asimetrico}
and \eqref{eq: metodo afin simetrico}.

In the next subsections we will give some preliminary results which will be used in section \ref{sec 3}.
Subsection \ref{ssec: combinatoria} provides combinatorial results necessary to prove the consistency in subsection \ref{ssec 31}.
The proof of stability given in subsection \ref{ssec 32} requires the results for stable maps proved in subsection \ref{ssec 22}.
In order to prove theorems \ref{th: consistencia asimetrico} and \ref{th: consistencia simetrico} we establish the concept of compatible flows given in subsection \ref{ssec 23}.

\subsection{Combinatorial results}
\label{ssec: combinatoria}
For a multiindex $\beta=(\beta_{1},\ldots,\beta_{r})\in\mathbb{N}^{r}$, we define
$\beta!=\beta_{1}!\ldots \beta_{r}!$ and $I_{r,k}=\{\beta\in\mathbb{N}^{r}:\beta_{1}+\cdots+\beta_{r}=k\}$ which
 satisfy $\mathbb{N}^{r}=\bigcup_{k=1}^{\infty}I_{r,k}$.

\noindent
\begin{remark}
\label{obs: Irk}
It holds $I_{r,k}=\emptyset$ if $r>k$, 
$I_{k,k}=\{(1,\ldots,1)\}$ and for $r+s\leq k$, 
$I_{r+s,k}=\bigcup_{j=s}^{k-r}I_{r,k-j}\times I_{s,j}$.
\end{remark}
We will need the following lemmas. We will give an outline of the proof of the first lemma and skip the proof of the second one.
\begin{lemma}
\label{le: combinatorio}
Let $q\in\mathbb{N}$, if $\gamma=(\gamma_{1},\ldots,\gamma_{s})$ satisfies the conditions \eqref{eq: cond asim}, then
for $1\leq k\leq q$, it holds that
\begin{align*}
\sum_{m=r}^{s}\binom{m}{r}m^{-k}\gamma_{m}=&\,0,\qquad r=1,\ldots,k-1,\\
\sum_{m=k}^{s}\binom{m}{k}m^{-k}\gamma_{m}=&\,\frac{1}{k!}.
\end{align*}
\end{lemma}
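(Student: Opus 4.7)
The plan is to expand $\binom{m}{r}$ as a polynomial in $m$ and then feed the expansion into the moment conditions \eqref{eq: cond asim}, which I will repackage as $\sigma_{0}=1$ and $\sigma_{\ell}=0$ for $1\leq\ell\leq q-1$, where
\[
\sigma_{\ell}:=\sum_{m=1}^{s}m^{-\ell}\gamma_{m}.
\]
Note that I may safely start the summation in the lemma at $m=1$ instead of $m=r$, since $\binom{m}{r}=0$ whenever $m<r$.

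First I would write
\[
m(m-1)\cdots(m-r+1)=\sum_{j=1}^{r}\alpha_{r,j}\,m^{j},
\qquad r\geq 1,
\]
with $\alpha_{r,r}=1$ (the signed Stirling numbers of the first kind), observing crucially that the constant term vanishes because $m$ is a factor of the falling factorial. Dividing by $r!$ and multiplying by $m^{-k}$ then gives
\[
\binom{m}{r}m^{-k}=\frac{1}{r!}\sum_{j=1}^{r}\alpha_{r,j}\,m^{j-k},
\]
so that
\[
\sum_{m=1}^{s}\binom{m}{r}m^{-k}\gamma_{m}=\frac{1}{r!}\sum_{j=1}^{r}\alpha_{r,j}\,\sigma_{k-j}.
\]

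The conclusion follows by simply tracking the range of the index $\ell=k-j$. When $1\leq r\leq k-1$ (and $k\leq q$), $\ell$ ranges over $\{k-r,\ldots,k-1\}\subseteq\{1,\ldots,q-1\}$, so every $\sigma_{k-j}$ vanishes by hypothesis, yielding the first identity. When $r=k\leq q$, the index $\ell$ ranges over $\{0,1,\ldots,k-1\}$; the terms with $1\leq \ell\leq k-1\leq q-1$ vanish as before, and the only surviving contribution is the one corresponding to $j=k$, giving $\frac{1}{k!}\,\alpha_{k,k}\,\sigma_{0}=\frac{1}{k!}$, which is the second identity.

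The argument is essentially bookkeeping; the only genuine input is that the falling factorial has no constant term in $m$, which guarantees that $\sigma_{0}$ (the only nonvanishing moment) contributes exactly once in the diagonal case $r=k$ and not at all in the strictly subdiagonal cases $r<k$. Accordingly, the main (mild) obstacle is purely notational: keeping straight the inequalities $1\leq k-j\leq q-1$ in each case so as to legitimately apply the moment conditions. An alternative, proof-by-induction-on-$r$ route using Pascal's identity $\binom{m}{r}=\binom{m-1}{r-1}+\binom{m-1}{r}$ is also feasible, but the direct polynomial expansion above looks considerably shorter and is what I would write up.
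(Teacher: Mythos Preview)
Your proposal is correct and follows essentially the same approach as the paper: expand the falling factorial $r!\binom{m}{r}=m(m-1)\cdots(m-r+1)$ as a polynomial in $m$ and feed the resulting powers of $m$ into the moment conditions $\sigma_\ell=\sum_{m}m^{-\ell}\gamma_m$. Your presentation is in fact a bit cleaner than the paper's, since you make explicit that the constant term of the falling factorial vanishes (so the sum in $j$ starts at $1$), which is exactly what is needed to handle the borderline case $k=q$ without invoking an undefined moment $\sigma_q$.
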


\begin{proof}
We consider the falling factorial $(x)_k=x(x-1)\dots(x-k+1)$,
which is a monic polynomial of degree $k$ such that
$(x)_k=\sum_{j=0}^k S(k,j)x^j.$ Then, for any natural number $m$ satisfying $0\leq m\leq k-1$, we have that $(m)_k=0$ and therefore  $\sum_{j=0}^k S(k,j) m^j=0.$
For the second equality we use that for $1\leq r \leq k-1$
\begin{equation*}
\sum_{m=r}^{s}\binom{m}{r}m^{-k}\gamma_{m}=\frac{1}{r!} \sum_{m=r}^{s} (m)_r m^{-k} \gamma_{m} =- \frac{1}{r!} \sum_{m=1}^{r-1} \sum_{j=0}^{r} S(r,j) m^{j}  \frac{\gamma_{m}}{m^k}=0
\end{equation*}
where we have used the hypothesis on the second equality.
Analogously for the first equality we have:
\begin{equation*}
k!\sum_{m=k}^s \binom{m}{k}m^{-k}\gamma_{m}=\sum_{m=k}^s (m)_k m^{-k}\gamma_{m}= 1 - \sum_{m=1}^{k-1} \left( \frac{\sum_{j=0}^{k} S(k,j) m^{j}}{m^k} \right) \gamma_{m}=1
\end{equation*}
where we have used the hypothesis on the second equality.
\end{proof}

\begin{lemma}
\label{le: combinatorio simetrico}
Let $n\in\mathbb{N}$, if $\gamma=(\gamma_{1},\ldots,\gamma_{s})$ satisfies the conditions \eqref{eq: cond sim}, then
for $1\leq k\leq q=2n$, it holds that
\begin{align*}
\sum_{m=1}^{s}\left[\binom{m}{r}+(-1)^{k+r}\binom{m+r-1}{m-1}\right]
m^{-k}\gamma_{m}=&\,0,\quad r=1,\ldots,k-1,\\
\sum_{m=1}^{s}\left[\binom{m}{k}+\binom{m+k-1}{m-1}\right]
m^{-k}\gamma_{m}=&\,\frac{1}{k!}.
\end{align*}
\end{lemma}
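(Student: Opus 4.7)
The plan is to follow the template of Lemma \ref{le: combinatorio}, but with Stirling numbers of the first kind $S(r,j)$ arranged so that after symmetrization only \emph{even} negative powers of $m$ survive; this is what makes the hypothesis \eqref{eq: cond sim} (which only bounds $\sum_m m^{-2k}\gamma_m$) strong enough.

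First, I would rewrite $\binom{m+r-1}{m-1}=\binom{m+r-1}{r}$ and use the classical identity $\binom{m+r-1}{r}=(-1)^{r}\binom{-m}{r}$, so that with the falling factorial $(x)_r=x(x-1)\cdots(x-r+1)=\sum_{j=0}^{r}S(r,j)\,x^{j}$ already used in the previous lemma,
\[
\binom{m}{r}+(-1)^{k+r}\binom{m+r-1}{m-1}
=\frac{(m)_{r}+(-1)^{k}(-m)_{r}}{r!}
=\frac{2}{r!}\sum_{\substack{0\le j\le r\\ j\equiv k\,(\mathrm{mod}\,2)}}S(r,j)\,m^{j}.
\]
Substituting and exchanging the order of summation turns the left hand side of the target identity into
\[
\frac{2}{r!}\sum_{\substack{0\le j\le r\\ j\equiv k\,(\mathrm{mod}\,2)}}S(r,j)\sum_{m=1}^{s}m^{j-k}\gamma_{m}.
\]
Because $j\equiv k\pmod 2$, the exponent $j-k$ is always an even integer $-2\ell$; and because $S(r,0)=0$ for $r\ge 1$, only indices $j\ge 1$ contribute, so $\ell\ge 1$ whenever $j<k$.

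For the first identity, $1\le r\le k-1$ forces $1\le j\le k-1$, so $\ell=(k-j)/2$ satisfies $1\le \ell\le n-1$ (checking the two parity cases for $k$ separately, using $k\le q=2n$). The hypothesis \eqref{eq: cond sim} gives $\sum_m m^{-2\ell}\gamma_m=0$ for each such $\ell$, and the whole expression vanishes. For the second identity, $r=k$, the same bound handles every term with $j<k$, leaving only $j=k$, for which $\ell=0$, $S(k,k)=1$, and $\sum_m \gamma_m=1/2$; the result is $\tfrac{2}{k!}\cdot 1\cdot\tfrac12=\tfrac{1}{k!}$, as claimed.

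The main obstacle is purely the parity bookkeeping of the previous paragraph: one must verify for each $k\in\{1,\dots,2n\}$ and each allowed $r$ that the index $\ell=(k-j)/2$ really lands inside the range $\{1,\dots,n-1\}$ where the hypothesis applies. This reduces to checking the even-$k$ and odd-$k$ cases separately and noting that the vanishing of $S(r,0)$ prevents $j=0$ (which would produce $\ell=k/2$ and escape the hypothesis when $k=2n$). Once this is in place, the rest of the argument is essentially the same algebraic manipulation used in Lemma \ref{le: combinatorio}.
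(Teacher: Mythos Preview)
Your proposal is correct and is precisely the kind of argument the paper has in mind: the paper's own proof reads in full ``The proof is similar to the previous lemma,'' and your expansion of $\binom{m}{r}+(-1)^{k+r}\binom{m+r-1}{m-1}$ as $\tfrac{1}{r!}\bigl((m)_r+(-1)^k(-m)_r\bigr)$ via the same Stirling-number identity used there is the natural way to carry that out. The parity bookkeeping you flag (only even negative powers of $m$ survive, and $S(r,0)=0$ keeps $\ell\le n-1$) is exactly the extra ingredient needed in the symmetric case, and your verification of it is sound.
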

\begin{proof}
The proof is similar to the previous lemma.
\end{proof}

\subsection{Stable maps}
\label{ssec 22}
Let $\mathsf{H}$ be a Hilbert space, and $\varphi:\mathbb{R}_{+}\times \mathsf{H}\to \mathsf{H}$
a continuous map
such that $\varphi(h)=\varphi(h,\cdot):\mathsf{H}\to \mathsf{H}$ is Lipschitz continuous and $\varphi(0)=I$, 
we define
\begin{align*}
\Lambda(\varphi,h)=
\mathop{\sup_{u,u'\in \mathsf{H}}}_{u\neq u'}\frac{\|\varphi(h,u)
-\varphi(h,u')\|_{\mathsf{H}}}{\|u-u'\|_{\mathsf{H}}}.
\end{align*} 
We say that $\varphi$ is stable if 
$ \kappa(\varphi)=\limsup_{h\downarrow 0}h^{-1}(\Lambda(\varphi,h)-1)<\infty$.
For any $\kappa>\kappa(\varphi)$, there exists $h^{*}(\kappa)>0$ such that
\[
\Lambda(\varphi,h)\leq 1+\kappa h\leq \mathrm{e}^{\kappa h},
\]
if $0<h<h_{*}=h_{*}(\kappa)$.
For $\varphi$ a linear flow, $\kappa(\varphi)$ is the logarithmic norm of the generator (see \cite{Dekker1984}).
A map $\varphi$ is called uniformly stable if 
\[
\mu(\varphi)=\limsup_{h\downarrow 0}h^{-1}\Lambda(\varphi-I,h)<\infty.
\]
Since $\Lambda(\varphi,h)\leq 1+\Lambda(\varphi-I,h)$, uniform stability implies stability.
Observe that the family of (uniformly) stable maps is scale-invariant and
if $\varphi_{\lambda}(h,u):=\varphi(\lambda h,u)$ with $\lambda>0$, then
$\kappa(\varphi_{\lambda})=\lambda\kappa(\varphi)$,
$\mu(\varphi_{\lambda})=\lambda\mu(\varphi)$.
If $\varphi$ is a quasicontraction semi-group then $\varphi$ is stable but it is uniformly stable
if and only if the infinitesimal generator is a bounded operator.

\begin{proposition}
\label{pr: estabilidad}
If $\phi_{0},\phi_{1}$ are (uniformly) stable, then the map $\varphi$ defined by
$\varphi(h,u)=\phi_{0}(h,\phi_{1}(h,u))$, is (uniformly) stable and
$\kappa(\varphi)\leq\kappa(\phi_{0})+\kappa(\phi_{1})$
($\mu(\varphi)\leq\mu(\phi_{0})+\mu(\phi_{1})$).
\end{proposition}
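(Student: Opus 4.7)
The proof reduces to two elementary observations about Lipschitz constants of composed maps. First, the submultiplicativity $\Lambda(\varphi,h)\leq\Lambda(\phi_{0},h)\Lambda(\phi_{1},h)$ follows directly from the definition of $\Lambda$ applied to $\varphi(h)=\phi_{0}(h)\circ\phi_{1}(h)$. Second, the pointwise identity
$$\varphi(h)-I=(\phi_{0}(h)-I)\circ\phi_{1}(h)+(\phi_{1}(h)-I)$$
holds trivially, and upon subtracting evaluations at $u,u'$ and applying the triangle inequality together with the submultiplicative bound, it yields
$$\Lambda(\varphi-I,h)\leq\Lambda(\phi_{0}-I,h)\,\Lambda(\phi_{1},h)+\Lambda(\phi_{1}-I,h).$$

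For the stability claim, given any $\kappa_{i}>\kappa(\phi_{i})$ I would invoke the estimate $\Lambda(\phi_{i},h)\leq e^{\kappa_{i}h}$ already noted in the text (valid for $0<h<h_{*}$), multiply to obtain $\Lambda(\varphi,h)\leq e^{(\kappa_{0}+\kappa_{1})h}$, and then read off $\kappa(\varphi)\leq\kappa_{0}+\kappa_{1}$ from $h^{-1}(e^{(\kappa_{0}+\kappa_{1})h}-1)\to\kappa_{0}+\kappa_{1}$ as $h\downarrow 0$. Letting $\kappa_{i}\downarrow\kappa(\phi_{i})$ closes this case. Routing through the exponential avoids the nuisance of taking $\limsup$ of a product of two factors, only one of which is automatically nonnegative.

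For the uniform stability claim, I would divide the second displayed bound by $h$ and pass to $\limsup$. Uniform stability of $\phi_{1}$ gives $\Lambda(\phi_{1}-I,h)=O(h)$, so $\Lambda(\phi_{1},h)\leq 1+\Lambda(\phi_{1}-I,h)\to 1$; together with $\limsup h^{-1}\Lambda(\phi_{0}-I,h)=\mu(\phi_{0})$ and $\limsup h^{-1}\Lambda(\phi_{1}-I,h)=\mu(\phi_{1})$, this produces $\mu(\varphi)\leq\mu(\phi_{0})+\mu(\phi_{1})$, which in particular shows $\mu(\varphi)<\infty$.

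There is no serious obstacle in either step: the whole argument is bookkeeping with Lipschitz constants, mirroring the classical fact that logarithmic norms are additive under composition to leading order. The only place where some care is required is ensuring that the $\limsup$ interacts correctly with products, handled in the first case by the exponentiation trick and in the second case by noting that the extra factor $\Lambda(\phi_{1},h)$ tends to $1$; the sole conceptual content is choosing the correct decomposition $\varphi-I=(\phi_{0}-I)\circ\phi_{1}+(\phi_{1}-I)$ in the uniform case.
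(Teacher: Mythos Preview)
Your proof is correct and matches the paper's approach almost exactly: same submultiplicativity $\Lambda(\varphi,h)\le\Lambda(\phi_0,h)\Lambda(\phi_1,h)$, same decomposition $\varphi-I=(\phi_0-I)\circ\phi_1+(\phi_1-I)$ for the uniform case. The only variation is in the stability step: the paper writes $\Lambda(\phi_0,h)\Lambda(\phi_1,h)-1=(\Lambda(\phi_0,h)-1)+\Lambda(\phi_0,h)(\Lambda(\phi_1,h)-1)$, divides by $h$, and passes to the $\limsup$ using $\Lambda(\phi_0,h)\to 1$, whereas you route through the exponential bound $\Lambda(\phi_i,h)\le e^{\kappa_i h}$ and let $\kappa_i\downarrow\kappa(\phi_i)$ at the end; your version sidesteps the mild nuisance of taking a $\limsup$ of a product with a possibly sign-changing factor, but the two arguments are equivalent in spirit and length.
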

\begin{proof}
Since $\Lambda(\varphi,h)\leq\Lambda(\phi_{0},h)\Lambda(\phi_{1},h)$, it follows that
\[
\frac{\Lambda(\varphi,h)-1}{h}\leq\frac{\Lambda(\phi_{0},h)-1}{h}
+\Lambda(\phi_{0},h)\frac{\Lambda(\phi_{1},h)-1}{h},
\]
using that $\Lambda(\phi_{0},h)\to 1$, we get the stability.
Writing $\varphi-I=(\phi_{0}-I)\circ\phi_{1}+\phi_{1}-I$, we have
\[
\Lambda(\varphi-I,h)\leq\Lambda(\phi_{0}-I,h)\Lambda(\phi_{1},h)+\Lambda(\phi_{1}-I,h).
\]
and then $\mu(\varphi)\leq\mu(\phi_{0})+\mu(\phi_{1})$.
\end{proof}

Let $\{\Phi_{m}\}_{1\leq m\leq s}$ be a family of stable maps and $\Phi$ an affine combination, i.e.
$\Phi=\gamma_{1}\Phi_{1}+\cdots+\gamma_{s}\Phi_{s}$ with $\gamma_{1}+\cdots+\gamma_{s}=1$,
it is easy to see that
\[
\Lambda(\Phi,h)\leq\sum_{m=1}^{s}|\gamma_{m}|\Lambda(\Phi_{m},h),
\]
therefore, $\Phi$ is not necessarily a stable map (but it is true for convex combinations). 
We have
\begin{proposition}
\label{pr: combinacion afin}
If $\{\Phi_{m}\}_{1\leq m\leq s}$ is a family of uniformly stable maps, then an affine combination
$\Phi$ is uniformly stable.
\end{proposition}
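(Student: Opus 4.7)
The plan is to exploit the affine constraint $\gamma_1+\cdots+\gamma_s=1$ to rewrite $\Phi-I$ as a genuine linear combination of the increments $\Phi_m-I$, where the problematic ``constant'' $I$ disappears. Once this is done, the Lipschitz functional $\Lambda(\cdot,h)$ interacts well with linear combinations via the triangle inequality, and uniform stability of each $\Phi_m$ yields a finite bound independent of the sign of the coefficients.

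More concretely, first I would write
\[
\Phi-I=\sum_{m=1}^{s}\gamma_{m}\Phi_{m}-\left(\sum_{m=1}^{s}\gamma_{m}\right)I=\sum_{m=1}^{s}\gamma_{m}(\Phi_{m}-I),
\]
using the normalization $\gamma_{1}+\cdots+\gamma_{s}=1$. Then, since $\Lambda$ is a seminorm--like functional on Lipschitz maps, subadditivity and positive homogeneity give
\[
\Lambda(\Phi-I,h)\leq\sum_{m=1}^{s}|\gamma_{m}|\,\Lambda(\Phi_{m}-I,h).
\]

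Dividing by $h$ and taking $\limsup$ as $h\downarrow 0$, the definition of uniform stability applied to each $\Phi_m$ yields
\[
\mu(\Phi)=\limsup_{h\downarrow 0}\frac{\Lambda(\Phi-I,h)}{h}\leq\sum_{m=1}^{s}|\gamma_{m}|\,\mu(\Phi_{m})<\infty,
\]
which is exactly uniform stability of $\Phi$.

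There is essentially no obstacle here: the affine condition is precisely what is needed to avoid the failure mode that prevents mere stability from being preserved under affine combinations (as noted in the paragraph preceding the proposition, stability fails because $\Lambda(\Phi,h)$ is bounded below by something like $1$, and a linear combination of such lower bounds need not stay near $1$ when some $\gamma_m$ are negative). By contrast, $\Lambda(\Phi_m-I,h)$ tends to $0$ like $h$, so absolute-value weights do no harm in the limit.
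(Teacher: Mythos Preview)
Your argument is correct and matches the paper's proof essentially line for line: the paper also writes $I=\gamma_{1}I+\cdots+\gamma_{s}I$ to obtain $\Phi-I=\sum_{m}\gamma_{m}(\Phi_{m}-I)$ and concludes $\mu(\Phi)\leq\sum_{m}|\gamma_{m}|\,\mu(\Phi_{m})$. Your additional commentary on why the affine condition is needed (versus mere stability) is a nice clarification but the core idea is identical.
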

\begin{proof}
Writing $I=\gamma_{1} I+\cdots+\gamma_{s} I$ and
$\Phi-I=\gamma_{1}(\Phi_{1}-I)+\cdots+\gamma_{s}(\Phi_{s}-I)$, therefore
we get $\mu(\Phi)\leq\sum_{1\leq m\leq s}|\gamma_{m}|\mu(\Phi_{m})$.
\end{proof}

\subsection{Compatible flows}
\label{ssec 23}
Let $\{\mathsf{H}_{k}\}_{k\geq 0}$ be a sequence of Hilbert spaces satisfying
$\mathsf{H}_{k+1}\hookrightarrow\mathsf{H}_{k}$, we define for $k\geq 0$
\[
\mathcal{D}_{k}=
\{f\in C^{\infty}(\mathsf{H}_{k},\mathsf{H}_{0}):\left.f\right|_{H_{k+l}}\in
C^{\infty}(\mathsf{H}_{k+l},\mathsf{H}_{l})\text{ for all } l\geq 0\}.
\]
We can see that if $f\in\mathcal{D}_{k}$ and $g\in\mathcal{D}_{j}$, then $f\circ g\in\mathcal{D}_{j+k}$.
Let $\varphi\in C([0,h_{*})\times\mathsf{H}_{0},\mathsf{H}_{0})$, we say that $\varphi$ is compatible with $\{\mathsf{H}_{k}\}_{k\geq 0}$
if and only if for $l,k\geq 0$, $\varphi\in C^{k,\infty}([0,h_{*})\times \mathsf{H}_{k+l},\mathsf{H}_{l})$.
As an example, let $A:D(A)\to\mathsf{H}$ be a self--adjoint operator, if we take
$\mathsf{H}_{k}=D(A^{k})$ with the inner product
$\langle u,v\rangle_{\mathsf{H}_{k}}=\langle u,v\rangle_{\mathsf{H}}+
\langle A^{k}u,A^{k}v\rangle_{\mathsf{H}}$, we see that $A^{k}\in\mathcal{D}_{k}$.
Assume $\varphi$ is the unitary group with infinitesimal generator $iA$, we have
$\varphi$ is compatible with $\{\mathsf{H}_{k}\}_{k\geq 0}$ and
\[
\frac{\partial^{k}}{\partial h^{k}}\varphi(h,u)=\varphi(h,(iA)^{k}u).
\]
Let $f\in\mathcal{D}_{j}$ and $\varphi$ compatible with $\{\mathsf{H}_{k}\}$,
we have $f\circ\varphi\in C^{k,\infty}([0,h_{*})\times\mathsf{H}_{j+k+l},\mathsf{H}_{l})$. Then 
we define the linear operator $L_{k}[\varphi]:\mathcal{D}_{j}\to\mathcal{D}_{j+k}$ as
\[
(L_{k}[\varphi]f)(u)=\left.\frac{\partial^{k}}{\partial h^{k}}f(\varphi(h,u))\right|_{h=0},
\]
with $u\in \mathsf{H}_{j+k}$.
\begin{lemma}
\label{le: phi psi}
If $\varphi$ and $\psi$ are compatible with $\{\mathsf{H}_{k}\}_{k\geq 0}$, then
$\varphi\circ\psi$ also is compatible with $\{\mathsf{H}_{k}\}_{k\geq 0}$ and satisfies
\begin{align*}
L_{k}[\varphi\circ\psi]=\sum_{j=0}^{k}\binom{k}{j}L_{k-j}[\psi]L_{j}[\varphi].
\end{align*}
\end{lemma}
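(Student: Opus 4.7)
The plan is to prove the compatibility of $\varphi\circ\psi$ first, and then derive the identity via the standard Leibniz-type chain rule for derivatives along the diagonal.

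For the compatibility claim, I would fix $f\in\mathcal{D}_{j}$, $u\in\mathsf{H}_{j+k+l}$ and analyse the composite $h\mapsto \varphi(h,\psi(h,u))$. Because $\psi$ is compatible, $\psi\in C^{k-j,\infty}([0,h_{*})\times\mathsf{H}_{(k-j)+(j+l)},\mathsf{H}_{j+l})$; and because $\varphi$ is compatible, $\varphi\in C^{j,\infty}([0,h_{*})\times\mathsf{H}_{j+l},\mathsf{H}_{l})$. Composing these gives $\varphi\circ\psi\in C^{k,\infty}([0,h_{*})\times\mathsf{H}_{k+l},\mathsf{H}_{l})$, which is precisely what compatibility requires; this also ensures that $f\circ(\varphi\circ\psi)$ is as smooth as needed for all subsequent differentiations to be legal.

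For the formula, I would introduce the auxiliary two-variable map
\[
F(h_{1},h_{2})=f(\varphi(h_{1},\psi(h_{2},u))),
\]
so that $(f\circ\varphi\circ\psi)(h,u)=F(h,h)$. The compatibility bookkeeping from the first step shows that $F$ is sufficiently smooth in both arguments to justify mixed partial derivatives at $(0,0)$ and to interchange the order of differentiation. The standard Leibniz formula for the diagonal of a smooth function of two variables then yields
\[
\left.\frac{d^{k}}{dh^{k}}F(h,h)\right|_{h=0}
=\sum_{j=0}^{k}\binom{k}{j}\left.\frac{\partial^{k}F}{\partial h_{1}^{j}\,\partial h_{2}^{k-j}}\right|_{(0,0)}.
\]
I would then evaluate each mixed derivative by differentiating first in $h_{1}$ at $h_{1}=0$ with $h_{2}$ held fixed,
\[
\left.\frac{\partial^{j}}{\partial h_{1}^{j}}F(h_{1},h_{2})\right|_{h_{1}=0}=(L_{j}[\varphi]f)(\psi(h_{2},u)),
\]
and then differentiating the resulting function of $h_{2}$ at $h_{2}=0$, which by definition of $L_{k-j}[\psi]$ yields $L_{k-j}[\psi]L_{j}[\varphi]f(u)$. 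Summing over $j$ gives the claimed identity.

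The main technical obstacle, and the only place where care is genuinely needed, is the regularity bookkeeping in the first paragraph: one must check that each partial derivative $\partial_{h_{1}}^{j}\partial_{h_{2}}^{k-j}$ of $F$ at $(0,0)$ makes sense as an element of $\mathsf{H}_{l}$ when $u\in\mathsf{H}_{j+k+l}$, since each derivative in $h_{1}$ of $\varphi$ consumes one order of regularity and similarly for $\psi$, and the membership $f\in\mathcal{D}_{j}$ must match the regularity of $\varphi(h_{1},\psi(h_{2},u))$ at each intermediate step. Once that is laid out, the combinatorial identity is merely the Leibniz rule for $F(h,h)$ and the rest is definitional.
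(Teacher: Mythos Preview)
Your proposal is correct and follows essentially the same approach as the paper: the paper also introduces the two-variable map $\theta(\tau,\eta,u)=\varphi(\tau,\psi(\eta,u))$, uses the compatibility of $\varphi$ and $\psi$ to obtain the required $C^{j,k-j,\infty}$ regularity of $\theta$, and then applies the Leibniz rule along the diagonal, evaluating the mixed partials in exactly the order you describe. The only cosmetic difference is that you phrase things in terms of $F(h_{1},h_{2})=f(\theta(h_{1},h_{2},u))$; be careful, though, that you are using the letter $j$ both for the index of $\mathcal{D}_{j}$ and for the summation index in the Leibniz formula, which should be disambiguated in a final write-up.
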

\begin{proof}
Let $\theta(\tau,\eta,u)=\varphi(\tau,\psi(\eta,u))$,
since $\psi\in C^{k-j,\infty}([0,h_{*})\times\mathsf{H}_{k+l},\mathsf{H}_{j+l})$ and
$\varphi\in C^{j,\infty}([0,h_{*})\times\mathsf{H}_{j+l},\mathsf{H}_{l})$
for $0\leq j\leq k$, then
$\theta\in C^{j,k-j,\infty}([0,h_{*})\times[0,h_{*})\times\mathsf{H}_{k+l},\mathsf{H}_{l})$.
Therefore, $\varphi\circ\psi\in C^{k,\infty}([0,h_{*})\times\mathsf{H}_{k+l},\mathsf{H}_{l})$
and then is compatible with $\{\mathsf{H}_{k}\}_{k\geq 0}$.
Given $f\in\mathcal{D}_{l}$, for any $u\in\mathsf{H}_{k+l}$ it is satisfied 
\begin{align*}
(L_{k}[\varphi\circ\psi]f)(u)=&\,
\sum_{j=0}^{k}\binom{k}{j}
\left.\frac{\partial^{k}}{\partial\eta^{k-j}\partial\tau^{j}}f(\theta(\tau,\eta,u))\right|_{(\tau,\eta)=(0,0)}\\
=&\,\sum_{j=0}^{k}\binom{k}{j}
\left.\frac{\partial^{k-j}}{\partial\eta^{k-j}}(L_{j}[\varphi]f)(\psi(\eta,u))\right|_{\eta=0}\\
=&\,\sum_{j=0}^{k}\binom{k}{j}
(L_{k-j}[\psi]L_{j}[\varphi]f)(u).
\end{align*}
\end{proof}
\begin{lemma}
\label{le: flujo}
If $\varphi$ is a flow, compatible with $\{\mathsf{H}_{k}\}_{k\geq 0}$, then $L_{k}[\varphi]=\left(L_{1}[\varphi]\right)^{k}$.
\end{lemma}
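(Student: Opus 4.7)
The plan is to combine the flow property $\varphi(h+\tau,u)=\varphi(\tau,\varphi(h,u))$ with induction on $k$, reducing $L_k[\varphi]$ to an iteration of $L_1[\varphi]$ through the chain rule. The base case $k=1$ holds by definition. For the inductive step, the key preliminary identity is
\begin{equation*}
\frac{\partial}{\partial h}f(\varphi(h,u)) = (L_{1}[\varphi]f)(\varphi(h,u))
\end{equation*}
for $f\in\mathcal{D}_{j}$ and $u\in\mathsf{H}_{j+1+l}$. This follows by writing $f(\varphi(h+\tau,u))=f(\varphi(\tau,\varphi(h,u)))$ using the flow property, and differentiating in $\tau$ at $\tau=0$; the compatibility of $\varphi$ with $\{\mathsf{H}_k\}$ ensures that $f\circ\varphi\in C^{1,\infty}$ on the appropriate space so that the partial derivative in $\tau$ is legitimate and coincides with the partial derivative in $h$.

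Iterating this identity, I would prove by induction on $k$ that
\begin{equation*}
\frac{\partial^{k}}{\partial h^{k}}f(\varphi(h,u)) = \bigl((L_{1}[\varphi])^{k}f\bigr)(\varphi(h,u))
\end{equation*}
for every $f\in\mathcal{D}_{j}$ and $u\in\mathsf{H}_{j+k+l}$. The inductive step applies the preliminary identity above to the function $g=(L_{1}[\varphi])^{k-1}f$, which lies in $\mathcal{D}_{j+k-1}$ since $L_{1}[\varphi]$ raises the index by one at each step. Once the formula at level $k$ is established, specializing $h=0$ and using $\varphi(0,u)=u$ yields
\begin{equation*}
(L_{k}[\varphi]f)(u) = \bigl((L_{1}[\varphi])^{k}f\bigr)(u),
\end{equation*}
which is the required equality of operators $\mathcal{D}_{j}\to\mathcal{D}_{j+k}$.

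The routine part of the argument is the chain-rule identity in the single variable $\tau$; the only genuine care required is the book-keeping of the scale $\{\mathsf{H}_{k}\}$ so that every composition and derivative remains in the correct space, which is exactly what the definition of compatibility is designed to guarantee. Lemma \ref{le: phi psi} is not strictly needed for the proof, though as a sanity check one can verify that substituting $\psi=\varphi$ there and using $\varphi\circ\varphi=\varphi(2h,\cdot)$ gives a consistent recursion; the main content here is simply that the flow property collapses the two time variables appearing in Lemma \ref{le: phi psi} into a single one, so that mixed partials reduce to iterated applications of $L_{1}[\varphi]$.
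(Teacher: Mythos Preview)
Your argument is correct, and it takes a genuinely different route from the paper's. The paper proceeds by applying Lemma~\ref{le: phi psi} to the composition $\varphi\circ\varphi$: using the inductive hypothesis on the cross terms it obtains
\[
L_{k}[\varphi\circ\varphi]=2L_{k}[\varphi]+(2^{k}-2)\bigl(L_{1}[\varphi]\bigr)^{k},
\]
and then invokes the doubling identity $\varphi(h)\circ\varphi(h)=\varphi(2h)$, which forces $L_{k}[\varphi\circ\varphi]=2^{k}L_{k}[\varphi]$; solving the resulting linear equation yields $L_{k}[\varphi]=(L_{1}[\varphi])^{k}$. Your approach instead exploits the full semigroup law $\varphi(h+\tau,u)=\varphi(\tau,\varphi(h,u))$ directly, differentiating in $\tau$ at $\tau=0$ to obtain $\partial_{h}f(\varphi(h,u))=(L_{1}[\varphi]f)(\varphi(h,u))$ and then iterating. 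This is more elementary in that it bypasses Lemma~\ref{le: phi psi} entirely and uses only the defining property of a flow together with compatibility; the paper's trick, by contrast, has the virtue of reusing the machinery already set up for general compositions, so it fits more economically into the surrounding development. Your bookkeeping on the spaces $\mathsf{H}_{j+k+l}$ is correct and is indeed the point where compatibility does the work.
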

\begin{proof}
The proof is by induction, suppose the result holds for $1\leq j\leq k-1$, using the lemma above we obtain that
\begin{align*}
L_{k}[\varphi\circ\varphi]=&\,2L_{k}[\varphi]+\sum_{j=1}^{k-1}\binom{k}{j}L_{k-j}[\varphi]L_{j}[\varphi]\\
=&\,2L_{k}[\varphi]+\sum_{j=1}^{k-1}\binom{k}{j}\left(L_{1}[\varphi]\right)^{k-j}\left(L_{1}[\varphi]\right)^{j}
=2L_{k}[\varphi]+(2^{k}-2)\left(L_{1}[\varphi]\right)^{k}.
\end{align*}
Since $\varphi(h)\circ\varphi(h)=\varphi(2h)$,
it is obtained that $L_{k}[\varphi\circ\varphi]=2^{k}L_{k}[\varphi]$, which implies the result for $j=k$.
\end{proof}

\section{Convergence}
\label{sec 3}
\subsection{Consistency}
\label{ssec 31}

The next two theorems ensures consistency results for the schemes given by \eqref{eq: metodo afin asimetrico} and
\eqref{eq: metodo afin simetrico}, when the coefficients of the affine combination that defines the methods $\Phi$ satisfy the algebraic conditions \eqref{eq: cond asim} and \eqref{eq: cond sim}, respectively.

Let $\{\mathsf{H}_{k}\}_{k\geq 0}$ be a sequence of Hilbert spaces satisfying
$\mathsf{H}_{k+1}\hookrightarrow\mathsf{H}_{k}$. We will assume that the flow $\phi$ associated to \eqref{eq: ut=Au} and the partial
flows $\phi_{0}$ and $\phi_{1}$ are compatible with $\{\mathsf{H}_{k}\}_{k\geq 0}$.
We have the following consistency results:
\begin{theorem}[Asymmetric case]
\label{th: consistencia asimetrico}
For any $q\in\mathbb{N}$, $\gamma=(\gamma_{1},\ldots,\gamma_{s})$ satisfying \eqref{eq: cond asim} and $u\in\mathsf{H}_{q}$,
the method $\Phi$ given by \eqref{eq: metodo afin asimetrico} satisfies
\[
\frac{\partial^{k}\Phi}{\partial h^{k}}(0,u)=\frac{\partial^{k}\phi}{\partial h^{k}}(0,u),
\]
for $k=0,\ldots,q$.
\end{theorem}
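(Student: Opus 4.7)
The plan is to reduce the claim to a single algebraic identity in the non-commutative polynomial ring generated by $X=L_1[\phi_0]$ and $Y=L_1[\phi_1]$. Since $\phi_0,\phi_1$ and $\phi$ are flows compatible with $\{\mathsf{H}_k\}$ and the infinitesimal generator of $\phi$ is $A_0+A_1$, one has $L_1[\phi]=X+Y$, and Lemma \ref{le: flujo} gives $L_k[\phi]=(X+Y)^k$. Differentiating $\Phi(h,u)=\sum_m\gamma_m\Phi_m^+(h/m,u)$ $k$ times in $h$ at $h=0$ (equivalently, applying $L_k[\cdot]$ to the identity element of $\mathcal{D}_0$, for which $(L_k[\varphi]\,\mathrm{id})(u)=\partial_h^k\varphi(h,u)|_{h=0}$) reduces the statement to the operator identity
\[
\sum_{m=1}^s\gamma_m m^{-k}L_k[\Phi_m^+]=(X+Y)^k,\qquad 0\leq k\leq q.
\]

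First, I would expand $L_k[\Phi_m^+]$ explicitly. Iterating Lemma \ref{le: phi psi} along the $2m$-fold composition $\Phi_m^+=\phi_1\circ\phi_0\circ\cdots\circ\phi_1\circ\phi_0$ and invoking Lemma \ref{le: flujo} for the partial flows $\phi_0,\phi_1$ gives
\[
L_k[\Phi_m^+]=\sum_{\alpha\in\mathbb{Z}_{\geq 0}^{2m},\,|\alpha|=k}\frac{k!}{\alpha!}\,X^{\alpha_{2m}}Y^{\alpha_{2m-1}}\cdots X^{\alpha_2}Y^{\alpha_1}.
\]

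The core of the argument is to analyze, for each non-commutative monomial $W$ in $X,Y$ of degree $k$, the coefficient $c_W(m)$ of $W$ in this expansion as a function of $m$. I would show that $c_W$ is a polynomial in $m$ of degree at most $k$ with no constant term (the latter because the ``$m=0$'' composition is formally the identity map, whose higher $L_k$'s all vanish). A combinatorial count --- viewing each contributing multiindex $\alpha$ as a monotone assignment of the letters of $W$ to the $2m$ alternating $X$- and $Y$-slots --- identifies the top-degree coefficient of $c_W(m)$: it equals $1$ in the monomial basis, and hence $k!$ in the binomial basis $\{\binom{m}{r}\}$. This is precisely the Lie--Trotter formula at leading order, $m^{-k}L_k[\Phi_m^+]\to (X+Y)^k$ as $m\to\infty$.

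Writing $c_W(m)=\sum_{r=1}^{k}\lambda_{W,r}\binom{m}{r}$ with $\lambda_{W,k}=k!$, Lemma \ref{le: combinatorio} (applicable because $k\leq q$ means the hypotheses \eqref{eq: cond asim} are available up to exponent $k-1$) gives $\sum_m\gamma_m m^{-k}\binom{m}{r}=\delta_{r,k}/k!$ for $1\leq r\leq k$. Hence $\sum_m\gamma_m m^{-k}c_W(m)=\lambda_{W,k}/k!=1$, matching the coefficient of $W$ in $(X+Y)^k$; summing over all length-$k$ words $W$ yields the required identity. The case $k=0$ is trivial since $L_0[\cdot]=I$ and $\sum\gamma_m=1$. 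The main obstacle is the combinatorial claim that the $\binom{m}{k}$-coefficient of $c_W(m)$ equals $k!$ uniformly in $W$; once established, Lemma \ref{le: combinatorio} converts the algebraic conditions on $\gamma$ into the required cancellations in essentially one line.
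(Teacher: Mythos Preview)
Your strategy is sound and reaches the same endpoint as the paper, but by a longer road. The paper never descends to individual words in $X,Y$. Instead it views $\Phi_m^{\pm}$ as the $m$-fold iterate of the \emph{single} map $\varphi=\Phi^{\pm}$ and applies Proposition~\ref{pr: L1Lr} (the iterated form of Lemma~\ref{le: phi psi}) to that map, obtaining
\[
L_k[\Phi_m^{\pm}]=\sum_{r=1}^{k}\binom{m}{r}\sum_{\beta\in I_{r,k}}\frac{k!}{\beta!}\,L_{\beta_1}[\Phi^{\pm}]\cdots L_{\beta_r}[\Phi^{\pm}].
\]
The $\binom{m}{r}$-structure is then automatic, the $r=k$ term is $k!\,(L_1[\Phi^{\pm}])^k$ because $I_{k,k}=\{(1,\dots,1)\}$, and Lemma~\ref{le: combinatorio} annihilates the $r<k$ contributions in one line (this is Proposition~\ref{pr: dfPhi}). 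Only at the very end does one invoke $L_1[\Phi^{\pm}]=X+Y=L_1[\phi]$ and Lemma~\ref{le: flujo}. This completely bypasses what you call ``the main obstacle'': there is no need to show, word by word, that $c_W(m)$ is a polynomial in $m$ with $\binom{m}{k}$-coefficient $k!$. That claim is true, but the ``monotone assignment'' picture you sketch does not directly account for the multinomial weights $k!/\alpha!$, so it remains a heuristic rather than a proof. In short, the paper's key simplification is to stop the expansion at the level of the operators $L_j[\Phi^{\pm}]$ rather than pushing all the way through to $L_1[\phi_0],L_1[\phi_1]$; your route is correct in outline but carries out combinatorics that the coarser grouping makes unnecessary.
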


\begin{theorem}[Symmetric case]
\label{th: consistencia simetrico}
For any $n\in\mathbb{N}$, $\gamma=(\gamma_{1},\ldots,\gamma_{s})$ satisfying \eqref{eq: cond sim} and $u\in\mathsf{H}_{q}$
with $q=2n$, the method $\Phi$ given by \eqref{eq: metodo afin simetrico} satisfies
\[
\frac{\partial^{k}\Phi}{\partial h^{k}}(0,u)=\frac{\partial^{k}\phi}{\partial h^{k}}(0,u),
\]
for $k=0,\ldots,q$.
\end{theorem}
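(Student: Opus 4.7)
The plan follows the strategy of Theorem~\ref{th: consistencia asimetrico}, with the symmetric combination $\Phi_{m}^{+}(h/m)+\Phi_{m}^{-}(h/m)$ replacing the single iterate; the symmetrization cancels odd-power-in-$1/m$ terms in the Taylor coefficient of each word, which is why the even-index conditions \eqref{eq: cond sim} suffice.

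Writing $D_{i}:=L_{1}[\phi_{i}]$ for $i=0,1$, the fact that $\phi$ is a flow with $L_{1}[\phi]=D_{0}+D_{1}$ combined with Lemma~\ref{le: flujo} gives $L_{k}[\phi]=(D_{0}+D_{1})^{k}$. By linearity of $\partial_{h}^{k}|_{h=0}$ in the affine combination defining $\Phi$, by the scaling $L_{k}[\varphi(\lambda\,\cdot)]=\lambda^{k}L_{k}[\varphi]$, and by evaluating on the identity $I\in\mathcal{D}_{0}$, the claim reduces to the operator identity
\[
\sum_{m=1}^{s}\gamma_{m}\,m^{-k}\bigl(L_{k}[\Phi_{m}^{+}]+L_{k}[\Phi_{m}^{-}]\bigr)=(D_{0}+D_{1})^{k}\qquad(k=0,\ldots,q).
\]
Iterating Lemma~\ref{le: phi psi} on the $2m$-fold composition $\Phi_{m}^{+}=\phi_{1}\circ\phi_{0}\circ\cdots\circ\phi_{1}\circ\phi_{0}$ and replacing each $L_{j}[\phi_{i}]$ by $D_{i}^{j}$ via Lemma~\ref{le: flujo} yields the non-commutative multinomial
\[
L_{k}[\Phi_{m}^{+}]=\sum_{\substack{\beta\in\mathbb{N}_{0}^{2m}\\|\beta|=k}}\frac{k!}{\beta!}\,D_{0}^{\beta_{1}}D_{1}^{\beta_{2}}\cdots D_{0}^{\beta_{2m-1}}D_{1}^{\beta_{2m}},
\]
with the analogous formula for $L_{k}[\Phi_{m}^{-}]$ obtained by swapping $D_{0}\leftrightarrow D_{1}$.

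Viewing both sides in the free associative algebra generated by $D_{0},D_{1}$, I compare the coefficient of each monomial $W$ of length $k$. Denoting by $C_{W}^{\pm}(m)$ the coefficient of $W$ in $L_{k}[\Phi_{m}^{\pm}]$, a direct combinatorial analysis (classifying the $\beta$'s that collapse to $W$ and grouping them by the number $r$ of alternating blocks in $W$'s canonical decomposition) organizes the $\gamma_{m}m^{-k}$-weighted sum
\[
\sum_{m=1}^{s}\gamma_{m}\,m^{-k}\bigl(C_{W}^{+}(m)+C_{W}^{-}(m)\bigr)
\]
into a linear combination of the quantities
\[
\sum_{m=1}^{s}\gamma_{m}\,m^{-k}\left[\binom{m}{r}+(-1)^{k+r}\binom{m+r-1}{m-1}\right],\qquad r=1,\ldots,k,
\]
whose coefficient at the top index $r=k$ is forced to equal $k!$ by matching the leading $m^{k}$ term of $C_{W}^{+}+C_{W}^{-}$ against the Trotter limit $(e^{hD_{0}/m}e^{hD_{1}/m})^{m}\to e^{h(D_{0}+D_{1})}$. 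Invoking Lemma~\ref{le: combinatorio simetrico} under hypothesis~\eqref{eq: cond sim} makes each $r\le k-1$ term vanish and replaces the $r=k$ term by $1/k!$, so the symmetric sum reduces to $k!\cdot(1/k!)=1$, exactly the coefficient of $W$ in $(D_{0}+D_{1})^{k}$.

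The main obstacle is the combinatorial reorganization in the last step: one must show how the polynomials $C_{W}^{+}(m)+C_{W}^{-}(m)$, which inherit a specific parity structure from the $D_{0}\leftrightarrow D_{1}$ reversal between $\Phi_{m}^{+}$ and $\Phi_{m}^{-}$, decompose into the signed combinations $\binom{m}{r}+(-1)^{k+r}\binom{m+r-1}{m-1}$ whose $\gamma_{m}m^{-k}$-averages are controlled by Lemma~\ref{le: combinatorio simetrico}. The sign $(-1)^{k+r}$ is tailor-made for exactly this symmetrization, and once the correspondence is traced out, the lemma delivers the required cancellations and normalization automatically.
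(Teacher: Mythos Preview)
Your reduction to the operator identity
\[
\sum_{m=1}^{s}\gamma_{m}\,m^{-k}\bigl(L_{k}[\Phi_{m}^{+}]+L_{k}[\Phi_{m}^{-}]\bigr)=(D_{0}+D_{1})^{k}
\]
is correct, and your multinomial expansion of $L_{k}[\Phi_{m}^{+}]$ in words of $D_{0},D_{1}$ is valid. But the step you yourself flag as ``the main obstacle''---showing that, for each word $W$, the polynomial $C_{W}^{+}(m)+C_{W}^{-}(m)$ decomposes into the specific signed combinations $\binom{m}{r}+(-1)^{k+r}\binom{m+r-1}{m-1}$---is not carried out. Your remark that ``once the correspondence is traced out, the lemma delivers the required cancellations'' is a description of what needs to happen, not a proof that it does. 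The appeal to the ``Trotter limit'' for the top coefficient is heuristic at best; even granting that the leading term of $C_W^+ + C_W^-$ in $m$ has the right size, you would still need to show that the lower-order terms align exactly with the quantities in Lemma~\ref{le: combinatorio simetrico}, and you have not done this.

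The paper avoids this word-by-word combinatorics entirely by working not in the free algebra on $D_{0},D_{1}$ but in the coarser basis of products $L_{\beta_{1}}[\Phi^{+}]\cdots L_{\beta_{r}}[\Phi^{+}]$. For $\Phi_{m}^{+}$ this is immediate from Proposition~\ref{pr: L1Lr}, giving the coefficient $\binom{m}{r}$. The crucial point is that $\Phi_{m}^{-}$ can be expressed in the \emph{same} basis: the identity $M_{k}=\sum_{j}(-1)^{j}\binom{k}{j}L_{j}[\Phi^{+}]L_{k-j}[\Phi^{-}]=0$ (Lemma~\ref{le: lemaprevio}) lets one solve recursively for $L_{k}[\Phi^{-}]$ in terms of $L_{j}[\Phi^{+}]$, and then an induction on $m$ yields $L_{k}[\Phi_{m}^{-}]$ with coefficient $(-1)^{k+r}\binom{m+r-1}{r}$ (Proposition~\ref{le: dpsim}). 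Summing the two produces precisely the bracketed expressions in Lemma~\ref{le: combinatorio simetrico}, with no further combinatorial work. Your route may be salvageable, but as written the hard part is missing; the paper's choice of basis and the key vanishing lemma $M_{k}=0$ are what make the argument go through cleanly.
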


\subsubsection{Asymmetric case}
We prove the consistency of method \eqref{eq: metodo afin asimetrico} using
lemma \ref{le: phi psi} and lemma \ref{le: combinatorio}.
\begin{proposition}
\label{pr: L1Lr}
Let $\varphi\in C([0,h_{*})\times\mathsf{H},\mathsf{H})$ be a compatible map with
$\{\mathsf{H}_{k}\}_{k\geq 0}$ satisfying $\varphi(0)=I$. Let
$\varphi_{1}=\varphi$ and $\varphi_{m+1}=\varphi\circ\varphi_{m}$, then
\begin{align*}
L_{k}[\varphi_{m}]=\sum_{r=1}^{k}\binom{m}{r}
\sum_{\beta\in I_{r,k}}\frac{k!}{\beta!}L_{\beta_{1}}[\varphi]\ldots L_{\beta_{r}}[\varphi].
\end{align*}
\end{proposition}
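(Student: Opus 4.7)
The natural approach is induction on $m$. For the base case $m=1$, only the term $r=1$ in the outer sum survives (since $\binom{1}{r}=0$ for $r\ge 2$), and $I_{1,k}=\{(k)\}$, so the right hand side collapses to $L_{k}[\varphi]$ as required.

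For the inductive step, I would use the fact that $\varphi_{m+1}=\varphi\circ\varphi_{m}$ together with Lemma \ref{le: phi psi} to write
\[
L_{k}[\varphi_{m+1}]=\sum_{j=0}^{k}\binom{k}{j}L_{k-j}[\varphi_{m}]L_{j}[\varphi].
\]
Isolating the boundary terms $j=0$ (which gives $L_{k}[\varphi_{m}]$, since $L_{0}[\varphi]$ is the identity because $\varphi(0)=I$) and $j=k$ (which gives $L_{k}[\varphi]=T_{1,k}$, where I abbreviate $T_{r,k}=\sum_{\beta\in I_{r,k}}\frac{k!}{\beta!}L_{\beta_{1}}[\varphi]\cdots L_{\beta_{r}}[\varphi]$), I would apply the inductive hypothesis to each $L_{k-j}[\varphi_{m}]$ in the remaining sum $1\le j\le k-1$.

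The main combinatorial step then matches the product $T_{r,k-j}\,L_{j}[\varphi]$ with the contribution from multiindices in $I_{r+1,k}$ whose last component equals $j$: for $\beta\in I_{r,k-j}$ and $\beta'=(\beta_{1},\dots,\beta_{r},j)\in I_{r+1,k}$ one has $\beta'!=\beta!\,j!$, and the prefactor rearranges as
\[
\binom{k}{j}\cdot\frac{(k-j)!}{\beta!}=\frac{k!}{j!\,\beta!}=\frac{k!}{\beta'!}.
\]
As $j$ ranges over $1,\ldots,k-r-1+1$ and $\beta$ over $I_{r,k-j}$, the pair $(\beta,j)$ runs exactly once over all of $I_{r+1,k}$, so the double sum reassembles into $T_{r+1,k}$. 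After reindexing $r\mapsto r-1$, the recursion becomes
\[
L_{k}[\varphi_{m+1}]=\sum_{r=1}^{k}\binom{m}{r}T_{r,k}+T_{1,k}+\sum_{r=2}^{k}\binom{m}{r-1}T_{r,k},
\]
and Pascal's identity $\binom{m}{r}+\binom{m}{r-1}=\binom{m+1}{r}$ (together with $\binom{m}{1}+1=\binom{m+1}{1}$ for the $r=1$ term) yields the claimed formula at $m+1$.

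The main obstacle is the bookkeeping in the reindexing step: verifying that the double sum $\sum_{j=1}^{k-1}\sum_{\beta\in I_{r,k-j}}$ covers each element of $I_{r+1,k}$ exactly once under the identification $\beta'=(\beta,j)$, that the range $r'\in\{2,\ldots,k\}$ comes out correctly (no contribution from $r'=k+1$ because $I_{k,k-j}$ is empty for $j\ge 1$), and that the factorial weights collapse as claimed. Everything else—base case, application of Lemma \ref{le: phi psi}, and the final Pascal identity—is straightforward.
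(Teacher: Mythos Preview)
Your proof is correct and follows exactly the route sketched in the paper: induction on $m$, the recursion from Lemma~\ref{le: phi psi}, and the decomposition of $I_{r+1,k}$ according to the last component (which is precisely the content of Remark~\ref{obs: Irk} with $s=1$). The paper's proof is a two-line outline of the same argument; you have simply filled in the bookkeeping it omits.
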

\begin{proof}
Using lemma \ref{le: phi psi}, we get that
\begin{align*}
L_{k}[\varphi_{m+1}]=L_{k}[\varphi_{m}]+L_{k}[\varphi]+
\sum_{j=1}^{k-1}\binom{k}{j}L_{k-j}[\varphi_{m}]L_{j}[\varphi],
\end{align*}
applying induction and using remark \ref{obs: Irk}, we obtain the result.
\end{proof}
\begin{proposition}
\label{pr: dfPhi}
For any $q\in\mathbb{N}$ and $\gamma=(\gamma_{1},\ldots,\gamma_{s})$ satisfying \eqref{eq: cond asim}, the method $\Phi$ given by
\eqref{eq: metodo afin asimetrico} satisfies $L_{k}[\Phi]I=\left(L_{1}[\Phi^{\pm}]\right)^{k}I$, for $k=1,\ldots,q$.
\end{proposition}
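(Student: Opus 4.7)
The plan is to differentiate the affine combination $\Phi(h)=\sum_{m=1}^{s}\gamma_{m}\Phi_{m}^{\pm}(h/m)$ term by term, expand each $L_{k}[\Phi_{m}^{\pm}]I$ using proposition \ref{pr: L1Lr}, and then collapse the resulting triple sum using lemma \ref{le: combinatorio}.

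First I would use the fact that $L_{k}[\cdot]$ depends linearly on the map, plus the elementary scaling $L_{k}[\varphi(\cdot/m,\cdot)]=m^{-k}L_{k}[\varphi]$ (which is immediate from the chain rule applied $k$ times and evaluated at $h=0$). This yields
\[
L_{k}[\Phi]I=\sum_{m=1}^{s}\gamma_{m}\,m^{-k}\,L_{k}[\Phi_{m}^{\pm}]I.
\]
Then I would substitute the identity of proposition \ref{pr: L1Lr}, applied with $\varphi=\Phi^{\pm}$, to get
\[
L_{k}[\Phi]I=\sum_{m=1}^{s}\gamma_{m}\,m^{-k}\sum_{r=1}^{k}\binom{m}{r}\sum_{\beta\in I_{r,k}}\frac{k!}{\beta!}\,L_{\beta_{1}}[\Phi^{\pm}]\cdots L_{\beta_{r}}[\Phi^{\pm}]I.
\]

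Next I would interchange the order of summation, bringing the $m$-sum inside:
\[
L_{k}[\Phi]I=\sum_{r=1}^{k}\left(\sum_{m=r}^{s}\binom{m}{r}m^{-k}\gamma_{m}\right)\sum_{\beta\in I_{r,k}}\frac{k!}{\beta!}\,L_{\beta_{1}}[\Phi^{\pm}]\cdots L_{\beta_{r}}[\Phi^{\pm}]I,
\]
where the $m$-range starts at $r$ because $\binom{m}{r}=0$ for $m<r$. Now lemma \ref{le: combinatorio} does exactly what is needed: the parenthesized factor vanishes for $1\le r\le k-1$ and equals $1/k!$ for $r=k$. Hence only the $r=k$ term remains; since $I_{k,k}=\{(1,\ldots,1)\}$ (remark \ref{obs: Irk}) with $\beta!=1$, the sum over $\beta$ collapses to a single term and the factor $k!\cdot(1/k!)=1$ yields
\[
L_{k}[\Phi]I=(L_{1}[\Phi^{\pm}])^{k}I,
\]
as desired.

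The main obstacle is essentially bookkeeping: keeping the indexing in the interchange of summations consistent, and remembering that $\Phi$ is not a flow (so lemma \ref{le: flujo} does not apply to it), which is why the derivatives of the affine combination must be pushed through $\Phi_{m}^{\pm}$ via proposition \ref{pr: L1Lr} rather than being read off a semigroup property. Conceptually the argument is a clean ``extrapolation kills low-order terms'' cancellation: conditions \eqref{eq: cond asim} are exactly what make every partition of $k$ into $r<k$ pieces disappear, leaving only the finest partition $(1,1,\ldots,1)$ that reproduces $(L_{1}[\Phi^{\pm}])^{k}I$.
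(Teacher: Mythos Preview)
Your proof is correct and follows essentially the same approach as the paper: linearize $L_k[\Phi]$ with the scaling $m^{-k}$, expand each $L_k[\Phi_m^{\pm}]$ via proposition~\ref{pr: L1Lr}, swap the sums, and invoke lemma~\ref{le: combinatorio} so that only the $r=k$ term survives, which by $I_{k,k}=\{(1,\dots,1)\}$ gives $(L_1[\Phi^{\pm}])^k I$. Your write-up is slightly more explicit (justifying the scaling and the lower limit $m=r$), but the argument is the same.
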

\begin{proof}
Since $L_{k}[\Phi]I=\sum_{m=1}^{s}m^{-k}\gamma_{m}L_{k}[\Phi_{m}^{\pm}]I$, using proposition \ref{pr: L1Lr}
we can see that
\begin{align*}
L_{k}[\Phi]I=
\sum_{r=1}^{k}\left(\sum_{m=1}^{s}\binom{m}{r}m^{-k}\gamma_{m}\right)
\sum_{\beta\in I_{r,k}}\frac{k!}{\beta!}L_{\beta_{1}}[\Phi^{\pm}]\ldots L_{\beta_{r}}[\Phi^{\pm}]I,
\end{align*}
from lemma \ref{le: combinatorio}, we get
$L_{k}[\Phi]I=\sum_{\beta\in I_{k,k}}\frac{1}{\beta!}L_{\beta_{1}}[\Phi^{\pm}]\ldots L_{\beta_{r}}[\Phi^{\pm}]I
=\left(L_{1}[\Phi^{\pm}]\right)^{k}I$.
\end{proof}

\begin{proof}{\it (theorem \ref{th: consistencia asimetrico})}
Since $\Phi^{+}=\phi_{1}\circ\phi_{0}$, from lemma \ref{le: phi psi} it holds that
$L_{1}[\Phi^{+}]=L_{1}[\phi_{0}]+L_{1}[\phi_{1}]=L_{1}[\phi]$. In the same way it follows that $L_{1}[\Phi^{-}]=L_{1}[\phi]$.
Using proposition \ref{pr: dfPhi} we obtain that
\begin{align*}
\frac{\partial^{k}\Phi}{\partial h^{k}}(0,u)
=\left(L_{1}[\Phi^{\pm}]\right)^{k}I(u)=\left(L_{1}[\phi]\right)^{k}I(u)
\end{align*}
and the theorem follows from lemma \ref{le: flujo}.
\end{proof}

\subsubsection{Symmetric case}
If $\phi_{0},\phi_{1}$ were reversible flows, then
it would hold $\Phi^{-}(h)\circ\Phi^{+}(-h)=I$ and using lemma \ref{le: phi psi} we would obtain that $M_{k}$, defined below by \eqref{eq: Mk=0}, is identically zero.
We get the same result for irreversible flows:
\begin{lemma}
\label{le: lemaprevio}
Let $M_{k}:\mathcal{D}_{0}\to\mathcal{D}_{k}$ be the operator given by
\begin{align}
\label{eq: Mk=0}
M_{k}=\sum_{j=0}^{k} (-1)^{j}\binom{k}{j} L_j[\Phi^{+}]L_{k-j}[\Phi^{-}],
\end{align}
then $M_{k}=0$.
\end{lemma}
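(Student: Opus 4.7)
The plan is to reduce the identity $M_k = 0$ to a purely algebraic cancellation in the two operators $A := L_1[\phi_0]$ and $B := L_1[\phi_1]$. The point is that, although $\Phi^+(-h)$ has no meaning in the irreversible setting, the operators $L_k[\Phi^\pm]$ depend only on the Taylor coefficients of $\Phi^\pm$ at $h = 0$, and these are determined by $A$ and $B$ through the same noncommutative polynomials as in the reversible case, so the same cancellation goes through.

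First, since $\phi_0$ and $\phi_1$ are flows compatible with $\{\mathsf{H}_k\}_{k\geq 0}$, lemma \ref{le: flujo} gives $L_m[\phi_0] = A^m$ and $L_m[\phi_1] = B^m$. Applying lemma \ref{le: phi psi} to $\Phi^+ = \phi_1 \circ \phi_0$ and $\Phi^- = \phi_0 \circ \phi_1$ then yields
\[
L_j[\Phi^+] = \sum_{a=0}^{j} \binom{j}{a}\, A^{j-a} B^{a}, \qquad L_{k-j}[\Phi^-] = \sum_{b=0}^{k-j} \binom{k-j}{b}\, B^{k-j-b} A^{b}.
\]

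Substituting into the definition of $M_k$ and composing produces a triple sum whose terms are noncommutative monomials $A^p B^q A^r$ with $p + q + r = k$. I would then group by monomial: the coefficient of $A^p B^q A^r$ in $M_k$ becomes $\sum_{j=p}^{k-r} (-1)^j \binom{k}{j}\binom{j}{p}\binom{k-j}{r}$, and the standard rearrangements $\binom{k}{j}\binom{j}{p} = \binom{k}{p}\binom{k-p}{j-p}$ and $\binom{k-p}{i}\binom{k-p-i}{r} = \binom{k-p}{r}\binom{k-p-r}{i}$ (with $i = j-p$) collapse it to $(-1)^p \binom{k}{p}\binom{k-p}{r}(1-1)^{k-p-r}$. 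The factor $(1-1)^{k-p-r}$ kills every term with $q = k - p - r \geq 1$, and the surviving monomials all reduce to $A^k$ with total coefficient $\sum_{p=0}^{k}(-1)^p \binom{k}{p}$, which vanishes for $k \geq 1$.

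Conceptually, if $T^\pm(h)$ denotes pullback by $\Phi^\pm(h)$, then $M_k f = (\partial_h - \partial_\tau)^k [f \circ \Phi^-(h) \circ \Phi^+(\tau)]|_{h=\tau=0}$, and the formal noncommutative series $T^+(\tau) T^-(h) = e^{\tau A} e^{(\tau+h)B} e^{h A}$ reduces to the identity along $\tau = -h$. In the reversible case this formal identity is realised by $\Phi^-(h) \circ \Phi^+(-h) = I$ as in the preamble to the lemma; in the irreversible case only the formal version survives, but that is all one needs because $L_k$ sees only the Taylor coefficients at $0$. The main obstacle is bookkeeping: because $A$ and $B$ do not commute one cannot collapse the exponentials via scalar identities, so the cancellation must be tracked one monomial $A^p B^q A^r$ at a time via the binomial manipulations above.
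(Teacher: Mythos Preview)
Your proof is correct and follows essentially the same route as the paper's: both expand $L_j[\Phi^\pm]$ via lemmas~\ref{le: phi psi} and~\ref{le: flujo} into a triple sum of monomials $A^{p}B^{q}A^{r}$ (with $A=L_1[\phi_0]$, $B=L_1[\phi_1]$), then use the same multinomial/binomial rearrangements to collapse the coefficient to an alternating sum $(1-1)^{q}$, and finally observe that the remaining $q=0$ contribution $\sum_{p}(-1)^p\binom{k}{p}A^k$ vanishes for $k\ge 1$. Your indexing $(p,q,r)$ and the conceptual remark about the formal identity $e^{\tau A}e^{(\tau+h)B}e^{hA}$ are nice touches, but the computational core is identical to the paper's argument.
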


\begin{proof}
Using lemma \ref{le: phi psi} for $\Phi^{\pm}$ and lemma \ref{le: flujo},
\begin{align*}
M_{k}=&\,
\sum_{j=0}^k \sum_{i=0}^j \sum_{l=0}^{k-j}
(-1)^{j}\binom{k}{j} \binom{j}{i} \binom{k-j}{l}
L_{1}[\phi_{0}]^{j-i}L_{1}[\phi_{1}]^{k+i-j-l}L_{1}[\phi_{0}]^{l}.
\end{align*}
Interchanging the order of summation, considering $n=j-i$ and using the identity
\[
 \binom{k}{n+i} \binom{n+i}{i} \binom{k-n-i}{l}=\binom{k-n-l}{i} \frac{k!}{n!l!(k-n-l)!},
\]
we can write $M_{k}$ as
\begin{align*}
M_{k}=&\,
\sum_{n=0}^k (-1)^n \sum_{l=0}^{k-n-1}
\left(\sum_{i=0}^{k-n-l} (-1)^{i} \binom{k-n-l}{i}\right) \frac{k!}{n!l!(k-n-l)!}\times\\
&\times L_{1}[\phi_0]^{n} L_{1}[\phi_1]^{k-n-l}L_{1}[\phi_0]^{l}
+\sum_{n=0}^k (-1)^n \binom{k}{k-n} L_{1}[\phi_0]^{k}.
\end{align*}
Since $\sum_{i=0}^{k-n-l} (-1)^i \binom{k-n-l}{i}=0$, we have the result.
\end{proof}
\begin{proposition}
\label{le: dpsim}
For $m\geq 1$ it holds that
\[
L_{k}[\Phi_m^{-}]=(-1)^{k}\sum_{r=1}^{k}C_{m,r}\sum_{\beta\in I_{r,k}}\frac{k!}{\beta!}
L_{\beta_{1}}[\Phi^{+}]\ldots L_{\beta_{r}}[\Phi^{+}],
\]
where $C_{m,r}=(-1)^{r}\binom{m+r-1}{r}$.
\end{proposition}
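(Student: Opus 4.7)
The strategy is to combine Proposition~\ref{pr: L1Lr} applied to $\varphi=\Phi^-$ with Lemma~\ref{le: lemaprevio}, so as to trade each operator $L_j[\Phi^-]$ for polynomials in the $L_l[\Phi^+]$. Applying Proposition~\ref{pr: L1Lr} at $\varphi=\Phi^-$ immediately gives
\[
L_k[\Phi_m^-]=\sum_{r=1}^{k}\binom{m}{r}\sum_{\beta\in I_{r,k}}\frac{k!}{\beta!}L_{\beta_1}[\Phi^-]\cdots L_{\beta_r}[\Phi^-],
\]
and the remaining work is to convert the right-hand side into a polynomial in the operators $L_l[\Phi^+]$.

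The cleanest route I would take is through formal generating series. Introduce
\[
\mathcal{X}(h)=I+\sum_{k\geq 1}L_k[\Phi^+]\frac{h^k}{k!},\qquad \mathcal{Y}(h)=I+\sum_{k\geq 1}L_k[\Phi^-]\frac{h^k}{k!},
\]
viewed as formal power series in $h$ with operator-valued coefficients. The identity displayed above is then nothing but the assertion that $\sum_{k\geq 0}L_k[\Phi_m^-]h^k/k!=\mathcal{Y}(h)^m$, since the inner multinomial sum is precisely the coefficient of $h^k/k!$ in $(\mathcal{Y}(h)-I)^r$ and $\sum_{r\geq 0}\binom{m}{r}(\mathcal{Y}-I)^r=\mathcal{Y}^m$.

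On the other hand, $M_k$ as defined in Lemma~\ref{le: lemaprevio} is exactly the coefficient of $h^k/k!$ in the product $\mathcal{X}(-h)\mathcal{Y}(h)$, so the vanishing $M_k=0$ for $k\geq 1$, together with the constant term $I$, gives $\mathcal{X}(-h)\mathcal{Y}(h)=I$. Hence $\mathcal{Y}(h)^m=\mathcal{X}(-h)^{-m}$, and it only remains to read off coefficients using the negative binomial expansion
\[
\mathcal{X}(-h)^{-m}=\sum_{r\geq 0}(-1)^r\binom{m+r-1}{r}(\mathcal{X}(-h)-I)^r,
\]
which uses the identity $\binom{-m}{r}=(-1)^r\binom{m+r-1}{r}$. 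The multinomial expansion of $(\mathcal{X}(-h)-I)^r$ contributes a sign $(-1)^{\beta_1+\cdots+\beta_r}=(-1)^k$ for $\beta\in I_{r,k}$, and collecting the coefficient of $h^k/k!$ yields precisely the claimed formula with $C_{m,r}=(-1)^r\binom{m+r-1}{r}$.

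The main obstacle is essentially bookkeeping; the formal inversion of $\mathcal{X}(-h)$ is automatic because its constant term is the identity operator, so no convergence or topology is required. If one preferred to stay inside the purely combinatorial framework used elsewhere in the paper, the same statement can be proved by induction on $m$: the base $m=1$ follows by iterating the recurrence for $L_k[\Phi^-]$ produced by $M_k=0$, and the inductive step applies Lemma~\ref{le: phi psi} to $\Phi_{m+1}^-=\Phi^-\circ\Phi_m^-$, after which a Vandermonde-type identity absorbs $\binom{m+s-1}{s}$ into $\binom{m+s}{s}$ and reproduces the coefficient $C_{m+1,r}$.
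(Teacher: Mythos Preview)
Your argument is correct. The generating-series identity $\mathcal{X}(-h)\mathcal{Y}(h)=I$ is exactly the content of Lemma~\ref{le: lemaprevio}, the identification of $\sum_k L_k[\Phi_m^-]h^k/k!$ with $\mathcal{Y}(h)^m$ is Proposition~\ref{pr: L1Lr} rewritten in exponential generating-function form, and the negative binomial expansion of $\mathcal{X}(-h)^{-m}$ is legitimate because the series has invertible constant term, so no topology is needed. Reading off the $h^k/k!$-coefficient gives the formula with the claimed $C_{m,r}$.

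The paper proceeds instead by the double induction you sketch in your closing paragraph: first on $k$ for $m=1$, isolating $L_k[\Phi^-]$ from $M_k=0$ and using the inductive hypothesis for $L_{k-j}[\Phi^-]$ together with Remark~\ref{obs: Irk}; then on $m$, applying Lemma~\ref{le: phi psi} to $\Phi_{m+1}^-=\Phi^-\circ\Phi_m^-$ and invoking the convolution identity $C_{m+1,r}=\sum_{s=0}^{r}C_{m,s}C_{1,r-s}$. Your generating-function route packages both inductions into a single algebraic inversion: the base case becomes $\mathcal{Y}=\mathcal{X}(-h)^{-1}$, and the Vandermonde step becomes the trivial fact $\mathcal{Y}^{m+1}=\mathcal{Y}^m\cdot\mathcal{Y}$. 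The gain is conceptual transparency and no combinatorial bookkeeping; the paper's approach, by contrast, stays entirely within the operator calculus developed in Section~\ref{ssec 23} and requires no auxiliary formal objects.
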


\begin{proof}
We proceed by induction in $m$ and in $k$: for $m=1$, eliminating $L_{k}[\Phi^{-}]$ from \eqref{eq: Mk=0} we have
\begin{align*}
L_{k}[\Phi^{-}]=-\sum_{j=1}^{k} (-1)^{j}\binom{k}{j} L_j[\Phi^{+}]L_{k-j}[\Phi^{-}],
\end{align*}
by inductive hypothesis for $k-j<k$ and using remark \ref{obs: Irk} we obtain the case $m=1$.
Applying lemma \ref{le: phi psi} to $\Phi_{m+1}^{-}=\Phi^{-}\circ\Phi_{m}^{-}$ 
and using $C_{m+1,r}=\sum_{s=0}^{r}C_{m,s}C_{1,r-s}$, we have the result.
\end{proof}

\begin{proposition}
\label{pr: Lphi}
If $\gamma$ satisfies conditions \eqref{eq: cond sim},
then the method $\Phi$ defined by \eqref{eq: metodo afin simetrico}
satisfies $L_{k}[\Phi]I=(L_{1}[\Phi^{+}])^{k}I$ for $k=0,\ldots,2n$.
\end{proposition}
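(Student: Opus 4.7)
The plan is to compute $L_k[\Phi]I$ by linearity and the scale-invariance already used implicitly throughout Section 3, then apply Proposition \ref{pr: L1Lr} and Proposition \ref{le: dpsim} to expand $L_k[\Phi_m^+]I$ and $L_k[\Phi_m^-]I$ in the same basis of iterated operators $L_{\beta_1}[\Phi^+]\cdots L_{\beta_r}[\Phi^+]$, and finally invoke Lemma \ref{le: combinatorio simetrico} to collapse all terms with $r<k$.

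First I would write, using the definition \eqref{eq: metodo afin simetrico} and the fact that rescaling the time variable by $1/m$ multiplies $L_k$ by $m^{-k}$,
\[
L_k[\Phi]I=\sum_{m=1}^{s} m^{-k}\gamma_m\bigl(L_k[\Phi_m^+]I+L_k[\Phi_m^-]I\bigr).
\]
Then I would substitute Proposition \ref{pr: L1Lr} for $L_k[\Phi_m^+]$ and Proposition \ref{le: dpsim} for $L_k[\Phi_m^-]$, noting that $C_{m,r}=(-1)^r\binom{m+r-1}{r}=(-1)^r\binom{m+r-1}{m-1}$, so the total coefficient of the basis element indexed by $(r,\beta)$ becomes
\[
\left[\binom{m}{r}+(-1)^{k+r}\binom{m+r-1}{m-1}\right]m^{-k}\gamma_m.
\]
After interchanging the sums over $m$ and over $(r,\beta)$, the inner sum over $m$ is exactly the expression appearing in Lemma \ref{le: combinatorio simetrico}.

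Applying that lemma, every $r\in\{1,\dots,k-1\}$ contributes $0$, and the only surviving term is $r=k$, where $I_{k,k}=\{(1,\dots,1)\}$ and $\beta!=1$, so
\[
L_k[\Phi]I=\frac{1}{k!}\cdot\frac{k!}{1}\bigl(L_1[\Phi^+]\bigr)^{k}I=\bigl(L_1[\Phi^+]\bigr)^{k}I,
\]
which is the desired identity. The case $k=0$ is immediate since $\sum_m(\gamma_m+\gamma_m)=2\cdot\tfrac12=1$ makes $\Phi(0)=I$.

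There is no real obstacle: the combinatorial identities in Lemma \ref{le: combinatorio simetrico} were precisely designed to annihilate the $r<k$ terms in this expansion and to normalize the $r=k$ term. The only mild care needed is bookkeeping the signs $(-1)^k$ and $(-1)^r$ coming from the ``$(-1)^k$'' factor in Proposition \ref{le: dpsim} combined with the sign inside $C_{m,r}$, and recognizing $\binom{m+r-1}{r}=\binom{m+r-1}{m-1}$ so that the formula matches the statement of Lemma \ref{le: combinatorio simetrico} verbatim. Once those identifications are made, the proof is essentially a direct verification.
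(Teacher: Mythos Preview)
Your proposal is correct and follows exactly the route indicated in the paper's (very terse) proof: expand $L_k[\Phi_m^+]$ via Proposition~\ref{pr: L1Lr}, expand $L_k[\Phi_m^-]$ via Proposition~\ref{le: dpsim}, combine them with the scaling factor $m^{-k}$, and then invoke Lemma~\ref{le: combinatorio simetrico} to kill the terms with $r<k$ and normalize the $r=k$ term. Your bookkeeping of the signs and of the binomial identity $\binom{m+r-1}{r}=\binom{m+r-1}{m-1}$ is accurate, so there is nothing to add.
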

\begin{proof}
Applying proposition \ref{pr: L1Lr} to $\Phi^{+}$, using proposition \ref{le: dpsim} and
lemma \ref{le: combinatorio simetrico} the result may be concluded.
\end{proof}

\begin{proof}{\it (theorem \ref{th: consistencia simetrico})}
From proposition \ref{pr: Lphi} we have
\begin{align*}
\frac{\partial^{k}\Phi}{\partial h^{k}}(0,u)
=(L_{1}[\Phi^{+}]^{k}I)(u)=(L_{1}[\phi]^{k}I)(u),
\end{align*}
and the theorem follows from lemma \ref{le: flujo}.
\end{proof}

\subsection{Stability}
\label{ssec 32}

Assume that $A_0$ and $A_1$ are Lipschitz continuous maps. Using Duhamel integral and Gronwall inequality one can deduce that the associated flows $\phi_0$ and $\phi_1$ and the affine method $\Phi$ are uniformly stable.
Except for ordinary differential equations, this is not the case.
However, if $A=A_{0}+A_{1}$, where $A_{0}$ is the infinitesimal generator of quasicontraction
semi-group and $A_{1}$ is a locally Lipschitz continuous map, we show that the affine methods are stable.
\begin{proposition}
\label{pr: Phi estable}
Let $\phi_{0}$ be a quasicontraction semi-group, that is
\[
\|\phi_{0}(h,u)\|_{\mathsf{H}}\leq \mathrm{e}^{\kappa_{0} h}\|u\|_{\mathsf{H}}
\]
and $\phi_{1}$ a uniformly stable map, then the method $\Phi$ given by \eqref{eq: metodos afines} is a stable map.
\end{proposition}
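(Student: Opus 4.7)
The plan is to reduce stability of $\Phi$ to the stability of $\phi_{0}$ plus a controlled remainder that inherits its smallness from the uniform stability of $\phi_{1}$. The natural auxiliary quantity is
\[
\Psi_{m}^{\pm}(h) = \Phi_{m}^{\pm}(h/m) - \phi_{0}(h).
\]
Since $\sum_{m}\gamma_{m}=1$ in the asymmetric case (respectively $\sum_{m}\gamma_{m}=1/2$ applied to both $\Phi_{m}^{+}$ and $\Phi_{m}^{-}$ in the symmetric case), writing
\[
\Phi(h) = \phi_{0}(h) + \sum_{m=1}^{s}\gamma_{m}\Psi_{m}^{\pm}(h)
\]
(with the obvious symmetric variant) reduces everything to showing $\Lambda(\Psi_{m}^{\pm},h)\le C h$ for some $C$ independent of $m$, once $h$ is small enough.

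The key device is the semigroup identity $\phi_{0}(h)=\phi_{0}(h/m)^{m}$, which lets me telescope. For $\Phi_{m}^{+}(h/m)=(\phi_{1}(h/m)\circ\phi_{0}(h/m))^{m}$ I would write
\[
\Phi_{m}^{+}(h/m)-\phi_{0}(h) = \sum_{k=1}^{m}\phi_{0}(h/m)^{m-k}\circ\bigl(\phi_{1}(h/m)-I\bigr)\circ\phi_{0}(h/m)\circ\bigl(\phi_{1}(h/m)\phi_{0}(h/m)\bigr)^{k-1},
\]
with the analogous rearrangement for $\Phi_{m}^{-}$. Each summand factors through $\phi_{1}(h/m)-I$, so by uniform stability $\Lambda(\phi_{1}-I,h/m)\le(\mu(\phi_{1})+\varepsilon)h/m$, while every other factor contributes a Lipschitz constant bounded by $\mathrm{e}^{(\kappa_{0}+\mu(\phi_{1})+\varepsilon)h/m}$ using Proposition~\ref{pr: estabilidad} for the composition $\phi_{1}\phi_{0}$ and the quasicontraction bound for $\phi_{0}$. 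Multiplying and summing, the $m$ terms each of size $O(h/m)$ yield
\[
\Lambda(\Psi_{m}^{\pm},h)\le \mathrm{e}^{(\kappa_{0}+\mu(\phi_{1})+\varepsilon)h}\,(\mu(\phi_{1})+\varepsilon)\,h,
\]
and the right-hand side is independent of $m$ as desired.

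From here the conclusion is immediate: by the triangle inequality for $\Lambda$,
\[
\Lambda(\Phi,h)\le \Lambda(\phi_{0},h) + \sum_{m=1}^{s}|\gamma_{m}|\,\Lambda(\Psi_{m}^{\pm},h)\le \mathrm{e}^{\kappa_{0}h}+Ch,
\]
so $h^{-1}(\Lambda(\Phi,h)-1)$ stays bounded as $h\downarrow 0$ and $\Phi$ is stable with $\kappa(\Phi)\le \kappa_{0}+C$. The symmetric case requires only that I perform the same telescoping for $\Phi_{m}^{-}(h/m)=(\phi_{0}(h/m)\phi_{1}(h/m))^{m}$ and use $\sum\gamma_{m}=1/2$ to recombine the two groups of terms into a single copy of $\phi_{0}(h)$.

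The principal obstacle I anticipate is making the $O(h)$ estimate genuinely uniform in the index $m\in\{1,\dots,s\}$ of the telescoping: naively each individual summand is $O(h/m)$, so one might fear losing a factor $m$ from the $m$ summands, but the trade-off in fact produces $m\cdot(h/m)=h$, provided the Lipschitz constants of $\phi_{0}(h/m)$ and $\phi_{1}(h/m)\phi_{0}(h/m)$ raised to powers up to $m$ are controlled uniformly---this is the step where the quasicontraction hypothesis on $\phi_{0}$ and the uniform bound $\Lambda(\phi_{1},h/m)\le 1+O(h/m)$ coming from uniform stability are both essential. Note that this only yields \emph{stability}, not uniform stability, which is consistent with Proposition~\ref{pr: combinacion afin} and the fact that here $\phi_{0}$ itself is only assumed stable.
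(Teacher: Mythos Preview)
Your proof is correct and follows essentially the same strategy as the paper: write $\Phi(h)=\phi_{0}(h)+\sum_{m}\gamma_{m}\Psi_{m}^{\pm}(h)$ using the affine condition, then show $\Lambda(\Psi_{m}^{\pm},h)=O(h)$ by exploiting that every occurrence of $\phi_{1}$ can be traded for a factor $\phi_{1}-I$ of Lipschitz size $O(h/m)$. The only difference is presentational---the paper obtains the bound $\limsup_{h\downarrow 0}h^{-1}\Lambda(\Psi_{m}^{\pm},h)\le\mu(\phi_{1})$ by induction on $m$ via the recursion $\Psi_{m}^{\pm}(h)=\Psi_{1}^{\pm}(h/m)\circ\Phi_{m-1}^{\pm}(h/m)+\phi_{0}(h/m)\circ\Psi_{m-1}^{\pm}((m-1)h/m)$, whereas you unwind that recursion into an explicit telescoping sum; both arguments depend in exactly the same way on the linearity of $\phi_{0}$ (to distribute it over differences) and on the uniform stability of $\phi_{1}$.
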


\begin{proof}
We give the proof only for the symmetric case \eqref{eq: metodo afin simetrico}. 
Using $\phi_{0}=2\sum_{m=1}^{s}\gamma_{m}\phi_{0}$, we see that
$\Phi=\phi_{0}+\sum_{m=1}^{s}\gamma_{m}(\psi_{m}^{+}+\psi_{m}^{-})$,
where $\psi_{m}^{\pm}(h)=\phi_{m}^{\pm}(h/m)-\phi_{0}(h)$. Thus, we have
\begin{align*}
\Lambda(\Phi,h)-1\leq\Lambda(\phi_{0},h)-1
+\sum_{m=1}^{s}|\gamma_{m}|(\Lambda(\psi_{m}^{+},h)+\Lambda(\psi_{m}^{-},h)).
\end{align*}
We use an inductive argument to show that
\begin{align}
\label{eq: Lambda/h}
\limsup_{h\downarrow 0}h^{-1}\Lambda(\psi_{m}^{\pm},h)\leq\mu(\phi_{1}).
\end{align}
For $m=1$, since $\psi_{1}^{+}(h)=(\phi_{1}(h)-I)\circ\phi_{0}(h),
\psi_{1}^{-}(h)=\phi_{0}(h)\circ(\phi_{1}(h)-I),$
we obtain that
\[\Lambda(\psi_{1}^{\pm},h)\leq \Lambda(\phi_{1}-I,h)\Lambda(\phi_{0},h)\] 
and using that
$\lim_{h\downarrow 0}\Lambda(\phi_{0},h)=1$, it yields
\[
\limsup_{h\downarrow 0}h^{-1}\Lambda(\psi_{1}^{\pm},h)\leq\mu(\phi_{1}).
\]
For $m>1$, it holds that
\begin{align*}
\psi_{m}^{\pm}(h)=&\,\psi_{1}^{\pm}(h/m)\circ\phi_{m-1}^{\pm}(h/m)
+\phi_{0}(h/m)\circ\psi_{m-1}^{\pm}((m-1)h/m),
\end{align*}
hence
\begin{align*}
\Lambda(\psi_{m}^{\pm},h)\leq 
&\,\Lambda(\psi_{1}^{\pm},h/m)\Lambda(\phi_{m-1}^{\pm},h/m)
+\Lambda(\phi_{0},h/m)\Lambda(\psi_{m-1}^{\pm},(m-1)h/m).
\end{align*}
By inductive hypothesis and since $\lim\limits_{h\to 0}\Lambda(\phi_{m-1}^{\pm},h)=1$ for all $m$,
we get \eqref{eq: Lambda/h} and therefore
$\kappa(\Phi)\leq \kappa_{0}+2\sum_{m=1}^{s}|\gamma_{m}|\mu(\phi_{1})$.
\end{proof}

\subsection{Convergence results}

The proof of convergence falls naturally from consistency and stability in the usual way.
For the sake of completeness we will give a general result in this regard.

\begin{theorem} \label{th: general_converg}
Let $\Phi \in C([0,T]\times H,H)$ and $u\in C([0,T],H)$ such that:
\begin{enumerate}
 \item Given $R>0$, there exists $\kappa>0$ such that $\|\Phi(t,u)-\Phi(t,v)\|_H\leq \mathrm{e}^{\kappa t} \|u-v\|_H$ 
for all $t\in [0,T]$ and $u,v \in B_R(0)$.
\item There exists a constant $C>0$ such that 
\begin{equation} \label{eq: resto taylor}
\|u(t+h)-\Phi(h,u(t))\|\leq C h^{q+1}.
\end{equation}
\end{enumerate}
Given $u_{0}\in\mathsf{H}$, there exists $\delta$ such that if 
$U_{0}\in\mathsf{H}$ satisfies $\|u_{0}-U_{0}\|_{\mathsf{H}}<\delta$ and $0<h<T$, then the
sequences $U_{n}=\Phi(h,U_{n-1})$ and $u_{n}=u(nh)$ are defined for $n \leq [T/h]$ and satisfies
\begin{align*}
\|u_{n}-U_{n}\|_{\mathsf{H}}\leq \mathrm{e}^{\kappa n h}\|u_{0}-U_{0}\|_{\mathsf{H}}
+C\frac{\mathrm{e}^{\kappa n h}-1}{\kappa}h^{q}.
\end{align*}
\end{theorem}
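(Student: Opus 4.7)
The plan is to run the classical Lady Windermere's fan / discrete Gronwall argument, converting a local one-step error into a global error bound. Setting $e_n := \|u_n - U_n\|_{\mathsf{H}}$ and writing
$$u_{n+1} - U_{n+1} = \bigl(u((n+1)h) - \Phi(h, u_n)\bigr) + \bigl(\Phi(h, u_n) - \Phi(h, U_n)\bigr),$$
hypothesis (2) bounds the first summand by $Ch^{q+1}$, while hypothesis (1) bounds the second by $\mathrm{e}^{\kappa h} e_n$ \emph{provided both $u_n$ and $U_n$ lie in a common ball $B_R(0)$}. This yields the scalar recursion $e_{n+1} \le \mathrm{e}^{\kappa h} e_n + C h^{q+1}$. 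Iterating gives
$$e_n \le \mathrm{e}^{\kappa n h} e_0 + C h^{q+1}\sum_{k=0}^{n-1}\mathrm{e}^{\kappa k h} = \mathrm{e}^{\kappa n h} e_0 + C h^{q+1}\frac{\mathrm{e}^{\kappa n h} - 1}{\mathrm{e}^{\kappa h} - 1},$$
and the elementary estimate $\mathrm{e}^{\kappa h} - 1 \ge \kappa h$ produces exactly the bound stated.

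The delicate point is that hypothesis (1) is only a \emph{local} Lipschitz estimate, so I must guarantee a priori that the iterates $U_n$ never leave a fixed ball $B_R(0)$ while the induction is running. Since $u \in C([0,T], \mathsf{H})$, set $M := \sup_{t\in[0,T]}\|u(t)\|_{\mathsf{H}}$; pick any $R > M$ and let $\kappa$ be the associated Lipschitz constant from (1). Then choose $\delta > 0$ small enough that
$$\mathrm{e}^{\kappa T}\delta + C\,\frac{\mathrm{e}^{\kappa T} - 1}{\kappa}\, T^{q} \le R - M.$$
This is the key a priori inequality: whenever the target bound holds at step $n$, it forces $\|U_n\|_{\mathsf{H}} \le M + e_n \le R$, so the Lipschitz estimate is applicable at the next step.

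With these parameters fixed once and for all, I would run an induction on $n \le [T/h]$. The base case is just the initial-data hypothesis $e_0 < \delta$. For the inductive step, the a priori bound above places $(u_n, U_n)$ inside $B_R(0)$, hypothesis (1) becomes usable, hypothesis (2) supplies the truncation error, and the triangle inequality displayed above produces the one-step recursion for $e_{n+1}$; solving that recursion with the inductive hypothesis gives the closed-form bound for $e_{n+1}$.

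The main obstacle is precisely this bootstrap between the Lipschitz ball and the size of the iterates: the constant $\kappa$ depends on $R$, and simultaneously $R$ must be large enough to accommodate the error the Gronwall bound produces in terms of that $\kappa$. There is no deep difficulty here — $T$ and $C$ are fixed, so one simply picks $R$ (which determines $\kappa$) and then chooses $\delta$ small enough to close the inequality — but it is the only place in the proof where care is needed; the algebraic manipulations, the geometric summation, and the final $\mathrm{e}^{\kappa h}-1 \ge \kappa h$ step are otherwise entirely routine.
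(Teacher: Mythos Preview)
Your proposal is correct and follows essentially the same approach as the paper: an induction on $n$ using the splitting $u_{n}-U_{n}=(u_{n}-\Phi(h,u_{n-1}))+(\Phi(h,u_{n-1})-\Phi(h,U_{n-1}))$, the bootstrap that keeps $U_{n-1}$ inside a fixed ball $B_{R}(0)$ via the very bound being proved, and the final use of $\mathrm{e}^{\kappa h}-1\ge\kappa h$. The only cosmetic difference is the choice of parameters: the paper fixes $R=2\max_{t}\|u(t)\|_{\mathsf{H}}$ and writes $h^{q}$ (rather than your $T^{q}$) in the smallness condition on $\delta$, so that the feasibility of that condition is automatic for small $h$.
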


\begin{proof}
The proof is by induction on $n$.
Let $R=2\max\limits_{t\in [0,T]}\|u(t)\|_{\mathsf{H}}$, and $\kappa$ given by 1),
taking $\delta>0$ 
\[
 \mathrm{e}^{\kappa T}\delta+C\frac{\mathrm{e}^{\kappa T}-1}{\kappa}h^{q}<R/2,
\]
using inductive hypothesis, we obtain
\begin{align*} 
\|U_{n-1}\|_{\mathsf{H}}\leq &\,\|u_{n-1}\|_{\mathsf{H}}+
\|U_{n-1}-u_{n-1}\|_{\mathsf{H}}\\
\leq &\,R/2+\mathrm{e}^{\kappa(n-1)h}\|u_{0}-U_{0}\|_{\mathsf{H}}
+C\frac{\mathrm{e}^{\kappa(n-1) h}-1}{\kappa}h^{q}\\
\leq &\,R/2+\mathrm{e}^{\kappa T}\delta+C\frac{\mathrm{e}^{\kappa T}-1}{\kappa}h^{q}<R.
\end{align*}
From (1) we get that $\|\Phi(h,u_{n-1})-\Phi(h,U_{n-1})\|_{\mathsf{H}}\leq
\mathrm{e}^{\kappa h}\|u_{n-1}-U_{n-1}\|_{\mathsf{H}}$ and therefore using (2) we obtain
\begin{equation}
\|u_{n}-U_{n}\|_{\mathsf{H}}\leq \mathrm{e}^{\kappa h}\|u_{n-1}-U_{n-1}\|_{\mathsf{H}}
+C h^{q+1}.
\end{equation}
Using $\mathrm{e}^{\kappa h}\geq 1+\kappa\, h$, the proof is complete.
\end{proof}

The result of convergence concerning problem \eqref{eq: ut=Au} will be deduced as a corollary of the latter 
theorem \eqref{th: general_converg}, for which we will need some assumptions that are not particularly restrictive in our context. We will assume: 

\begin{enumerate}
 \item \label{as: 1} $\phi$, $\phi_{0}$, $\phi_{1}$ are compatible with $\{\mathsf{H}_{k}\}_{k\geq 0}$,
a sequence of Hilbert spaces with
$\mathsf{H}_{0}=\mathsf{H}$ and $\mathsf{H}_{k+1}\hookrightarrow\mathsf{H}_{k}$.
\item \label{as: 2}  Given $R>0$, there exists $h^*>0$ such that for any $k\ge 0$, if $u\in B_{\mathsf{H}}(0,R)\cap\mathsf{H}_{k}$, then
$\phi(t,u)$, $\phi_{0}(t,u)$ and $\phi_{1}(t,u)$ are defined on $[0,h^*]$.
\item \label{as: 3} The maps $\phi_{0}$, $\phi_{1}$ satisfy the hypothesis of proposition \ref{pr: Phi estable} on 
$B_{\mathsf{H}}(0,R)$.
\end{enumerate}

\begin{remark} \label{rm: tiempo exist}
Note that \eqref{as: 2} implies, by decreasing $h^*$ if necessary, $\phi_{m}^{\pm}(t,u_{0})$ and $\Phi(t,u_{0})$
are defined on $[0,h^*]$.
\end{remark}

\begin{remark}
\label{rm: 311}
These conditions may seem too restrictive, nevertheless they are satisfied in many evolution problems.
As an example, we consider the NLS equation with $\mathsf{H}=\mathrm{H}^{\sigma}(\mathbb{R}^{d})$ the
Sobolev spaces consisting of the $\sigma$ times derivable functions and
$\mathsf{H}_{k}=\mathrm{H}^{\sigma+2k}(\mathbb{R}^{d})$. 
Clearly, the unitary group generated by $\mathrm{i}\Delta $ is compatible with  $\{\mathsf{H}_{k}\}_{k\geq 0}$.
It is known that if $\sigma>d/2$, the spaces $\mathsf{H}_{k}$ 
are Banach algebras with the punctual product of functions, therefore any application as $A_{1}(u)=P(u,u^{*})$, 
where $P$ is a polynomial such that $P(0,0)=0$, turns out to be locally Lipschitz in $\mathsf{H}_{k}$, implying the 
existence of the flow $\phi_{1}$.
Being $A_{1}$ a polynomial application, is infinitely derivable and its derivatives are locally Lipschitz, proving that the 
flow $\phi_{1}$ is compatible with $\{\mathsf{H}_{k}\}_{k\geq 0}$.
From the following estimate
\[
\|A_{1}(u)\|_{\mathsf{H}_{k}}\leq C(\|u\|_{\mathsf{H}})\|u\|_{\mathsf{H}_{k}},
\]
we deduce that the times of existence of the solutions do not depend on $k$.
We refer to \cite{Cazenave2003} for the proof of the mentioned properties of the flow $\phi$ associated to the NLS initial value problem.
\end{remark}


\begin{corollary}
\label{cr: convergence}
Let $\phi_0,\phi_1$ be the associated flows of the partial problems \eqref{eq: ut=A0u}, \eqref{eq: ut=A1u} 
and $\phi$ the flow of \eqref{eq: ut=Au} satisfying assumptions \eqref{as: 1},\eqref{as: 2} and \eqref{as: 3}.
Let $\Phi$ be defined by \eqref{eq: metodo afin asimetrico} or \eqref{eq: metodo afin simetrico} with 
$\gamma=(\gamma_1,\ldots,\gamma_s)$ 
satisfying \eqref{eq: cond asim} or \eqref{eq: cond sim} respectively. Then, given $u_0\in H_{q+1}$ and 
$u(t)=\phi(t,u_0)$
the maximal solution of \eqref{eq: ut=Au} defined on $[0,T_*)$, for any $T\in (0,T_*)$ there exist $h_*,\delta,
\kappa,C$
such that if $U_0\in H_{q+1}$ satisfies $\|u_0-U_0\|<\delta$ and $0<h<h_*$, then the sequence 
$U_n=\Phi(h,U_{n-1})$ 
is defined for $n\leq [T/h]$ and satisfies
\begin{align*}
\|\phi(nh,u_0)-U_{n}\|_{\mathsf{H}}\leq \mathrm{e}^{\kappa n h}\|u_{0}-U_{0}\|_{\mathsf{H}}
+C\frac{\mathrm{e}^{\kappa n h}-1}{\kappa}h^{q}.
\end{align*}
\end{corollary}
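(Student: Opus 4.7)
The plan is to derive the corollary directly from the abstract convergence result Theorem \ref{th: general_converg} by verifying its two hypotheses for the concrete method $\Phi$ and the exact flow $u(t)=\phi(t,u_0)$ on the compact interval $[0,T]$. Since $\phi(\cdot,u_0)\in C([0,T_*),\mathsf{H})$ and $[0,T]\subset[0,T_*)$, the quantity $R=2\sup_{t\in[0,T]}\|\phi(t,u_0)\|_{\mathsf{H}}$ is finite, and I would work on the ball $B_{\mathsf{H}}(0,R)$ throughout. By remark \ref{rm: tiempo exist} and assumption \eqref{as: 2}, shrinking $h_*$ if necessary ensures that $\Phi(h,\cdot)$ and all the intermediate flows $\phi_m^{\pm}(h,\cdot)$ are defined on this ball for $h\in[0,h_*]$.

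For hypothesis (1) of Theorem \ref{th: general_converg}, the exponential Lipschitz estimate follows from Proposition \ref{pr: Phi estable}. Assumption \eqref{as: 3} provides the quasicontraction semigroup property of $\phi_0$ and the uniform stability of $\phi_1$ on $B_{\mathsf{H}}(0,R)$, so the proposition gives a finite $\kappa(\Phi)$; choosing any $\kappa>\kappa(\Phi)$ and using the very definition of $\Lambda$ yields $\|\Phi(h,u)-\Phi(h,v)\|_{\mathsf{H}}\leq e^{\kappa h}\|u-v\|_{\mathsf{H}}$ for $u,v\in B_{\mathsf{H}}(0,R)$ and $h$ small enough.

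For hypothesis (2), the $h^{q+1}$ local truncation error, I would Taylor expand both $\phi(h,\phi(t,u_0))=\phi(t+h,u_0)$ and $\Phi(h,\phi(t,u_0))$ in $h$ around $0$. Compatibility of $\phi,\phi_0,\phi_1$ with $\{\mathsf{H}_k\}$ from \eqref{as: 1} transfers to $\Phi$ via lemma \ref{le: phi psi} applied iteratively, so together with $u_0\in\mathsf{H}_{q+1}$ both sides are of class $C^{q+1}$ in $h$ with values in $\mathsf{H}$. The consistency theorems \ref{th: consistencia asimetrico} and \ref{th: consistencia simetrico} show that the $h$-derivatives of orders $0,\ldots,q$ at $h=0$ coincide, so Taylor with integral remainder reduces the truncation estimate to a uniform bound on $\partial_h^{q+1}\phi(h,\phi(t,u_0))$ and $\partial_h^{q+1}\Phi(h,\phi(t,u_0))$ for $(t,h)\in[0,T]\times[0,h_*]$.

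The main obstacle will be precisely this uniformity: the consistency theorems give pointwise equality of derivatives at $h=0$, but to conclude a bound $C h^{q+1}$ I need $C$ to be independent of the base point $\phi(t,u_0)$ and of $h\in[0,h_*]$. This is where the hypothesis $u_0\in\mathsf{H}_{q+1}$ (rather than just $\mathsf{H}_q$) becomes essential: compatibility of $\phi$ with the scale combined with $u_0\in\mathsf{H}_{q+1}$ gives $\phi(\cdot,u_0)\in C([0,T],\mathsf{H}_{q+1})$, hence the trajectory lies in a bounded subset of $\mathsf{H}_{q+1}$, and the smoothness built into the compatibility definition yields the needed uniform bounds on the higher $h$-derivatives of both $\phi$ and $\Phi$. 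With the two hypotheses of Theorem \ref{th: general_converg} thus verified, a direct application of that theorem yields the stated error estimate.
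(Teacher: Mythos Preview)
Your proposal is correct and follows essentially the same route as the paper's own proof: reduce to Theorem~\ref{th: general_converg}, obtain the Lipschitz bound (1) from Proposition~\ref{pr: Phi estable} via assumption~\eqref{as: 3}, and obtain the local truncation estimate (2) from compatibility, the consistency Theorems~\ref{th: consistencia asimetrico}/\ref{th: consistencia simetrico}, and Taylor's formula. Your treatment is in fact more careful than the paper's brief version, since you make explicit why $u_0\in\mathsf{H}_{q+1}$ is needed (to keep the trajectory in a bounded set of $\mathsf{H}_{q+1}$ and thus control the $(q{+}1)$-st derivative uniformly in the base point) and how compatibility of $\Phi$ follows from that of $\phi_0,\phi_1$ via Lemma~\ref{le: phi psi}.
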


\begin{proof}
We begin by noting that, as a consequence of remark \ref{rm: tiempo exist}, there exists $h^*>0$ such
that $\phi(t,u)$ and $\Phi(t,u)$ are defined on $[0,h^*]$ for all $u\in B_{\mathsf{H}}(0,R)\cap\mathsf{H}_{q+1}$.
It is enough to prove assumptions (1) and (2) of theorem \ref{th: general_converg}.
Condition (1) is a straightforward consequence of proposition \ref{pr: Phi estable}. 

Being $\phi$ and $\Phi$ compatible with $\{H_k\}_{k\geq 0}$ we have that $\phi,\Phi \in C^{q+1,\infty}([0,h^*]\times H^{q+1},H)$. 
Then, since $u(t+h)=\phi(h,u(t))$, condition (2) is concluded from theorem \ref{th: consistencia asimetrico} (or \ref{th: consistencia simetrico}) and Taylor formula.
\end{proof}

The computation of $\Phi$ requires to solve exactly the partial problems. Besides some simple cases of ordinary 
differential equations, this is not possible. In what follows we will show that we can define integration methods of 
order $q$ using suitable approximations of the flows $\phi_{0}$ and $\phi_{1}$.
Let $\Psi\in C([0,h_{*})\times\mathsf{H},\mathsf{H})$ satisfying
\begin{align}
\label{eq: Psi-Phi}
\|\Psi(h,u)-\Phi(h,u)\|_{\mathsf{H}}\leq \rho
\end{align}
for $u\in B_{\mathsf{H}}(R,0)$. Let $V_{0}=U_{0}$ and $V_{n}=\Psi(h,V_{n-1})$, from the stability of $\Phi$ we 
get that
\[
\|U_{n}-V_{n}\|_{\mathsf{H}}\leq C\frac{\mathrm{e}^{\kappa n h}-1}{\mathrm{e}^{\kappa h}-1}\rho.
\]
Let $\psi_j$ be an approximation of $\phi_j$ such that $\|\phi_{j}(h,u)-\psi_{j}(h,u)\|_{\mathsf{H}}\leq Ch^{q+1}$, for
$j=0,1$. Then the map $\Psi$ defined by \eqref{eq: metodos afines} with $\psi_{j}$ in place of $\phi_{j}$ 
satisfies the condition \eqref{eq: Psi-Phi} with $\rho=C h^{q+1}$ and consequently the method $\Psi$ satisfies
\begin{align}
\label{eq: u-V}
\|u_{n}-V_{n}\|_{\mathsf{H}}\leq \mathrm{e}^{\kappa n h}\|u_{0}-V_{0}\|_{\mathsf{H}}
+M\frac{\mathrm{e}^{\kappa n h}-1}{\kappa}h^{q}.
\end{align}

We consider the following example. Let $\{u_{n}\}_{n\in\mathbb{N}}$ be an orthonormal basis of $\mathsf{H}$ and 
$\phi_{0}(t,u)=\sum_{n\in\mathbb{N}}\mathrm{e}^{\alpha_{n}t}\langle u_{n},u\rangle u_{n}$ 
with $\mathrm{Re}(\alpha_{n})\leq\kappa$.
We define the spaces 
\[
\mathsf{H}_{k}=\{u\in\mathsf{H}:\sum_{n\in\mathbb{N}}|\alpha_{n}|^{2k}|\langle u_{n},u\rangle|^{2}<\infty\},
\]
so that $\phi_{0}$ becomes compatible with $\{\mathsf{H}_{k}\}_{k\geq 0}$ 
and satisfies 
$\|\phi_{0}(t,u)\|_{\mathsf{H}_{k}}\leq \mathrm{e}^{\kappa t}\|u\|_{\mathsf{H}_{k}}$. If we take
$\psi_{0}(t,u)=\sum_{1\leq n\leq N}\mathrm{e}^{\alpha_{n}t}\langle u_{n},u\rangle u_{n}$, we obtain that
\[
\|\phi_{0}(h,u)-\psi_{0}(h,u)\|_{\mathsf{H}}\leq
\mathrm{e}^{\kappa h}\inf_{n> N}|\alpha_{n}|^{-k}\|u\|_{\mathsf{H}_{k}}.
\]
Hence, if $\liminf\limits_{n\to\infty}|\alpha_{n}|=+\infty$, for $h.R>0$, there exists a $N=N(h)$ 
large enough such that $\|\phi_{0}(h,u)-\psi_{0}(h,u)\|_{\mathsf{H}}\leq Ch^{q+1}$ if $u\in\mathsf{H}_{k}$
with $\|u\|_{\mathsf{H}_{k}}\le R$. From theorem 1.2 in \cite{Borgna2015}, we can see that
$U_{n},V_{n}\in B_{\mathsf{H}_{k}}(R,0)$ for $h$ small enough. Therefore, inequality \eqref{eq: u-V} holds.

\section{Numerical examples}
\label{sec 4}
We present several examples which illustrate the performance of the proposed methods.
\subsection{Ordinary differential system}
We begin by considering an elementary example which is simple to deal with the proposed methods,
but it would be more expensive to solve with symplectic methods.
The bidimensional system
\begin{align}
\label{eq: example}
\begin{cases}
\dot{u}_{1}=4u_{2}-\tan(u_{1}),\\
\dot{u}_{2}=-4u_{1}-\tan(u_{2}),
\end{cases}
\end{align}
can be splitted in a linear system and a decoupled system.
The linear flow is a clockwise rotation, orbits are showed in figure \ref{fig: example 1} for concentric circles.
Lines that go through the origin are the orbits of the system $\dot{u}_{j}=-\tan(u_{j})$,
which solution is $u_{j}(t)=\arcsin(\mathrm{e}^{-t}\sin(u_{j,0}))$. Note that solutions are not defined for
$t<\ln|\sin(u_{j,0})|\leq 0$, which implies $h$ should be small for symplectic methods (with negative steps).
For initial data $(1,3/2)$, the solution computed with Runge--Kutta with a very small $h$ is showed in
figure \ref{fig: example 1}, the points are the solution obtained with the symmetric method $\Phi$ of fourth order
with $s=2$, $\gamma_{1}=-1/6$, $\gamma_{2}=2/3$ and $h=0.2$.
It can be seen numerically that for this step, $h=0.2$, the symplectic method proposed in \cite{Neri1987} can not be used.
\begin{figure}[ht]
\begin{center}
\includegraphics[scale=0.5]{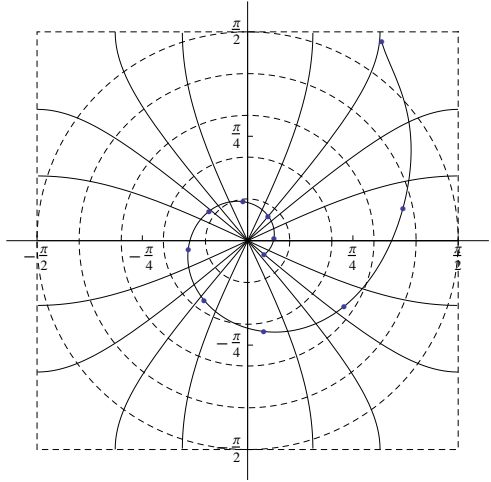}
\caption{Flows $\phi_{0},\phi_{1}$ and solution of \eqref{eq: example} obtained with $\Phi$ of fourth order.}
\label{fig: example 1}
\end{center}
\end{figure}

\subsection{Oscillatory reaction--diffusion system}
In this example, we study the behavior of the methods of a reaction--diffusion system,
as the ones shown in \cite{Kopell1973}.
Since this system is an irreversible problem, symplectic methods with negative steps can not be used.
We consider the system
\begin{align}
\label{eq: PTW}
\begin{split}
\partial_{t}v=&\,\Delta v+(1-r^{2})v-(\omega_{0}-\omega_{1}r^{2})v,\\
\partial_{t}w=&\,\Delta w+(\omega_{0}-\omega_{1}r^{2})v+(1-r^{2})w,
\end{split}
\end{align}
where $r^{2}=v^{2}+w^{2}$.
If $u=v+iw$, equation \eqref{eq: PTW} reads as follows:
\[
\partial_{t}u=\Delta u+(1-|u|^{2})u+\mathrm{i}(\omega_{0}-\omega_{1}|u|^{2})u.
\]
The right hand member can be written as $A_0 u+A_1(u)$, where
$A_{0}u=\Delta u$ and
\[
A_{1}(u)=(1-|u|^{2})u+\mathrm{i}(\omega_{0}-\omega_{1}|u|^{2})u.
\]
The flow $\phi_{1}$ is given by
\[
\phi_{1}(h,u)=u\mathrm{e}^{h}(1+(\mathrm{e}^{2h}-1)|u|^{2})^{-1/2}
\mathrm{e}^{\mathrm{i}(\omega_{0}h-\omega_{1}/2\ln(1+(\mathrm{e}^{2h}-1)|u|^{2}))}.
\]
We will restrict our discussion to $L$--periodic solutions,
flow $\phi_{0}$ can be computed approximately by using discrete Fourier transform (DFT).
Let $\eta$ be an odd integer, $\eta=2l+1$ with $l\in \mathbb{N}$, consider
\[
(I_{\eta}u)(x)=\sum\limits_{\nu =-{l}}^{{l}}\hat{U}_{\nu }\mathrm{e}^{\mathrm{i} a \nu  x},
\]
where $a=2\pi/L$ and 
$\hat{U}_{\nu }$ is the DFT coefficient given by
\[
\hat{U}_{\nu }=\frac{1}{\eta}\sum_{r=0}^{\eta-1}U_{r}\mathrm{e}^{-\mathrm{i} 2\pi r\nu/\eta}=\frac{1}{\eta}
\sum_{r=0}^{\eta-1}u(L r/\eta)\mathrm{e}^{-\mathrm{i}2\pi r\nu/\eta}.
\]
Since $\mathrm{e}^{-\mathrm{i}2\pi r\nu/\eta}=\mathrm{e}^{-\mathrm{i}2\pi r(\nu \pm \eta)/\eta}$, it holds that 
$\hat{U}_{\nu }=\hat{U}_{\nu \pm \eta}$.
From lemma 2.2. in \cite{Tadmor1986}, for $u\in \mathrm{H}^{\sigma}(\mathbb{T})$
with $\sigma>1/2$ we have that
\[
\|u-I_{\eta}u\|_{L^{2}(\mathbb{T})}\leq
C_{L,\sigma} \eta^{-\sigma}\|u\|_{\mathrm{H}^{\sigma}(\mathbb{T})}.
\]
Thus we can derive the following proposition.
\begin{proposition}
\label{pr: Tadmor}
Let $\psi_{0}(h)=\phi_{0}(h)I_{\eta}$, then for $u\in \mathrm{H}^{\sigma}(\mathbb{T})$ with $\sigma>1/2$ it holds that
\begin{align*}
\|\psi_{0}(h)u-\phi_{0}(h)u\|_{L^{2}(\mathbb{T})}\leq C_{L,\sigma} 
\eta^{-\sigma}\|u\|_{\mathrm{H}^{\sigma}(\mathbb{T})}.
\end{align*}
\end{proposition}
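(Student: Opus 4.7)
The proof is quite short and essentially consists of exploiting the linearity of $\phi_0$ together with two facts: the Tadmor interpolation estimate quoted just above the proposition, and the contractivity of the heat semigroup on $L^2(\mathbb{T})$.

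The first step is to use the definition $\psi_0(h) = \phi_0(h) I_\eta$ and the linearity of the operator $\phi_0(h)$ (recall $\phi_0$ is the flow of $\partial_t u = \Delta u$, hence a bounded linear operator for each fixed $h$) to rewrite
\begin{equation*}
\psi_0(h)u - \phi_0(h)u = \phi_0(h)\bigl(I_\eta u - u\bigr).
\end{equation*}
This reduces the problem to estimating $\phi_0(h)$ applied to the interpolation error.

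Next, I would observe that $\phi_0(h)$ is a contraction on $L^2(\mathbb{T})$. Indeed, expanding in Fourier series, $\phi_0(h)$ acts as multiplication by $\mathrm{e}^{-(a\nu)^2 h}$ on the $\nu$-th Fourier mode, so by Parseval's identity $\|\phi_0(h)v\|_{L^2(\mathbb{T})} \le \|v\|_{L^2(\mathbb{T})}$ for every $h \ge 0$. Applying this with $v = I_\eta u - u$ yields
\begin{equation*}
\|\psi_0(h)u - \phi_0(h)u\|_{L^2(\mathbb{T})} \le \|I_\eta u - u\|_{L^2(\mathbb{T})}.
\end{equation*}

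Finally I would invoke the interpolation error bound cited from lemma 2.2 of \cite{Tadmor1986}, namely $\|u - I_\eta u\|_{L^2(\mathbb{T})} \le C_{L,\sigma}\,\eta^{-\sigma}\|u\|_{\mathrm{H}^\sigma(\mathbb{T})}$ valid for $\sigma > 1/2$, and chain the two inequalities to conclude. There is no genuine obstacle here: the only point to keep in mind is that the uniform-in-$h$ bound on $\phi_0(h)$ matters (so that the constant on the right does not blow up with $h$), but this is immediate from the contractivity of the heat semigroup on the torus.
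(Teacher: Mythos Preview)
Your argument is correct and matches the paper's intended approach: the paper simply states the proposition as an immediate consequence of Tadmor's interpolation estimate, and your proof spells out the two missing ingredients (linearity of $\phi_0(h)$ and its $L^2$-contractivity) exactly as needed.
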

From the definition of $\psi_{0}(h)$ and using that $\hat{U}_{\nu }=\hat{U}_{\nu \pm \eta}$, we get
\begin{align*}
(\psi_{0}(h)u)(L r/\eta)=&\,\sum_{\nu =-l}^{l}\hat{U}_{\nu }\mathrm{e}^{-a^{2}\nu ^{2}t}
\mathrm{e}^{\mathrm{i}2\pi r\nu/\eta}
=\sum_{\nu =l+1}^{\eta-1}\hat{U}_{\nu }\mathrm{e}^{-a^{2}(\eta-\nu )^{2}t}\mathrm{e}^{\mathrm{i}2\pi r\nu/\eta}\\
&\,+\sum_{\nu =0}^{l}\hat{U}_{\nu }\mathrm{e}^{-a^{2}\nu ^{2}t}\mathrm{e}^{\mathrm{i}2\pi r\nu/\eta}
=\sum_{\nu =0}^{\eta-1}\hat{U}_{\nu }\mathrm{e}^{-a^{2}\lambda_{\nu }t}\mathrm{e}^{\mathrm{i}2\pi r\nu/\eta},
\end{align*}
where $\lambda_{\nu }=\eta^{2}g(\nu /\eta)$ for $0\leq \nu \leq \eta-1$ and $g(\xi)=\xi^{2}-2(\xi-1/2)_{+}$.

In \cite{Kopell1973} the stability of the planar waves
\begin{align*}
v(x,t)=&\,r^{*}\cos(\theta_{0}\pm a x+(\omega_{0}-\omega_{1}r^{*2})t),\\
w(x,t)=&\,r^{*}\sin(\theta_{0}\pm a x+(\omega_{0}-\omega_{1}r^{*2})t),
\end{align*}
is proven, if $L>2\pi(3+2\omega_{1}^{2})^{1/2}$, where $r^{*}=L^{-1}(L^{2}-4\pi^{2})^{1/2}$ 
and $\theta_{0}$ is an arbitrary constant (see also \cite{Sherratt2003}).
Taking $L=4\pi$, $\omega_{0}=1$, $\omega_{1}=1/2$ and $u_{0}=r^{*}\mathrm{e}^{iax}$,
we compare methods given by \eqref{eq: metodo afin simetrico} of order $q=4,6,8$ with $\eta=63$. 
A similar analysis to that in remark \ref{rm: 311} for the quasicontraction semi-group generated by $\Delta$ 
shows that the hypothesis of corollary \ref{cr: convergence} are satisfied.
The fourth order method used is the same as the previous example, 
for the sixth order method we take $s=3$, $\gamma_{1}=1/48$, $\gamma_{2}=-8/15$ and $\gamma_{3}=81/80$,
for the eighth order method we take $s=4$, $\gamma_{1}=-1/720$, $\gamma_{2}=8/45$, 
$\gamma_{3}=-729/560$ and $\gamma_{4}=512/315$.
In figure \ref{fig: Errores_globales_reaccion_difusion} global errors for $T=10$ are shown.
We note that the slopes coincide with the expected order up
to the point where the rounding error dominates the total error.
\begin{figure}[ht]
\begin{center}
\includegraphics[scale=0.65]{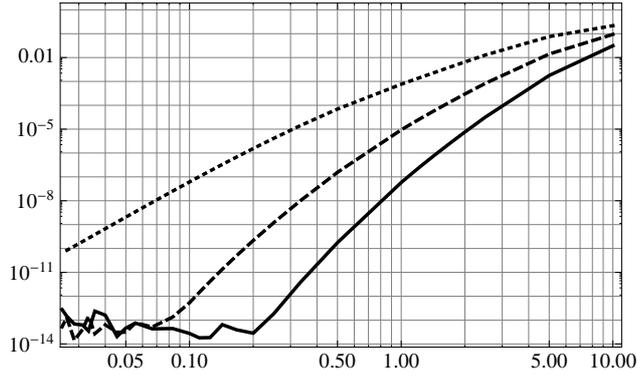} 
\caption{Global Error of $\Phi$ vs. $h$ for $q=4,6,8$}
\label{fig: Errores_globales_reaccion_difusion}
\end{center}
\end{figure}
In order to show the stability of the planar waves, we consider the initial data
$\tilde{u}_{0}(x)=0.8u_{0}(x)+0.1+2.5\mathrm{e}^{\mathrm{i}2ax}-0.8\mathrm{i}\mathrm{e}^{\mathrm{i}3ax}$. In 
figure \ref{fig: evolucion} we can see the evolution of 
the fourth order method $\Phi(t,\tilde{u}_{0})$ for $t\in [0,50]$,
calculated with $\eta=63$ and $h=0.1$ and $\phi(t,u_{0})$ is showed in dashed line.

\begin{figure}[ht]
\begin{center}
\includegraphics[height=0.275\hsize]{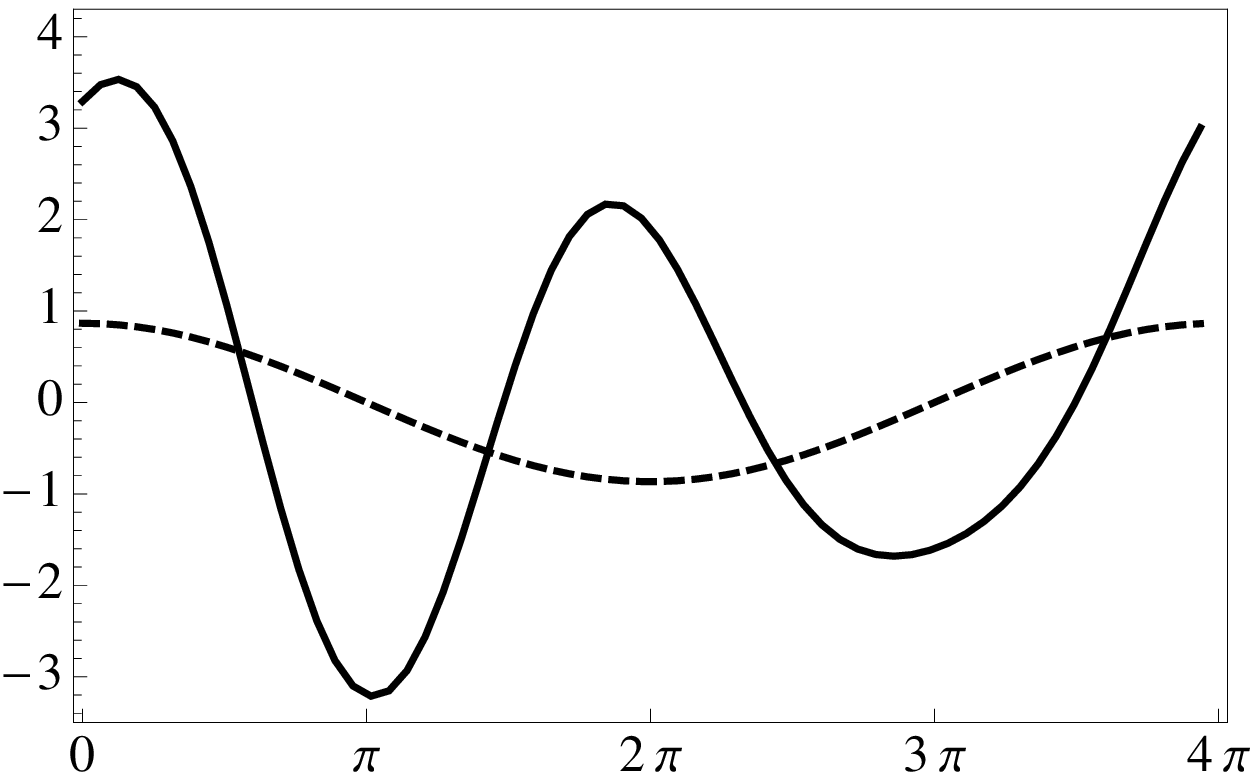} 
\qquad
\includegraphics[height=0.275\hsize]{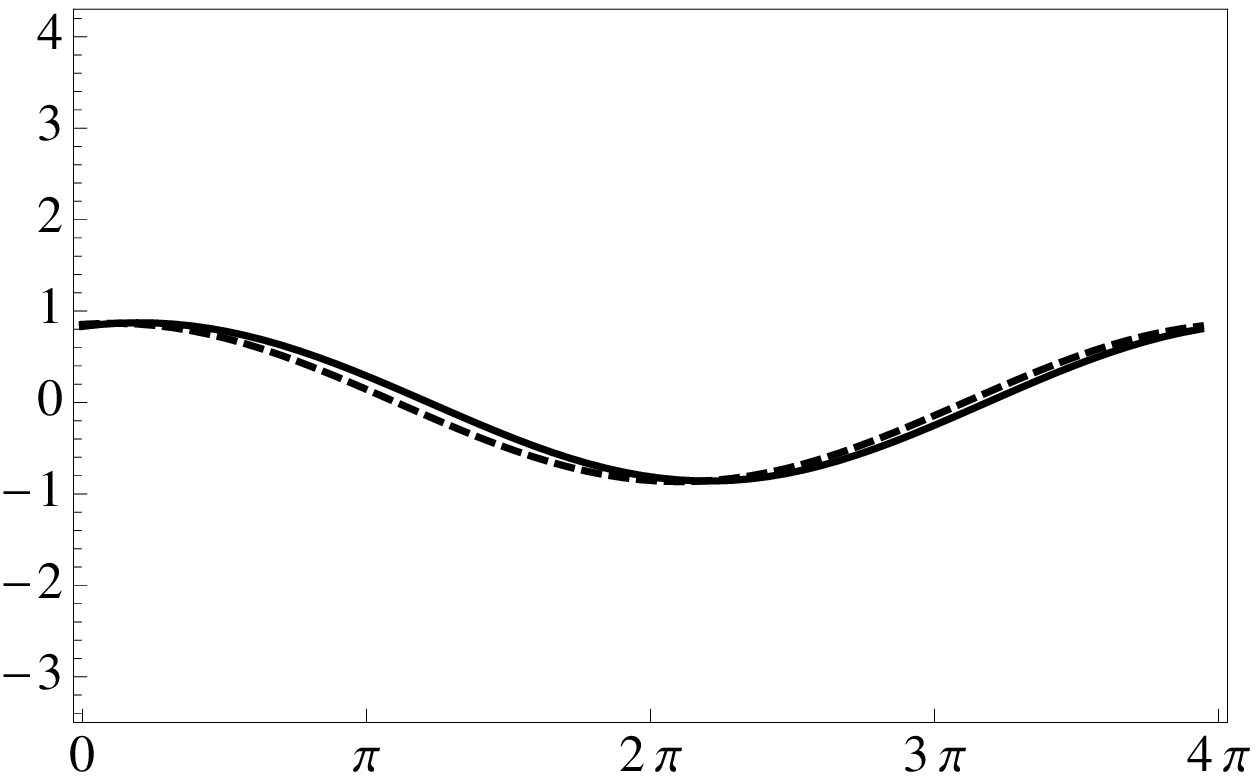} 
\caption{\label{fig: evolucion}$\mathrm{Re}(\Phi(t,\tilde{u}_{0}))$ for $t=0$ (left) and $t=50$ (right)}
\end{center}
\end{figure}

\subsection{Regularized Schr\"odinger--Poisson equation}
In this example, we study the $2\pi$--periodic solutions of the regularized Schr\"odinger--Poisson equation
\begin{align}
\label{eq: Schroedinger cubica}
\begin{cases}
\partial_{t}u=\mathrm{i}\partial_{x}^{2} u-(-\partial_{x}^{2})^{\beta}u+\mathrm{i}|u|^{2}u
+\mathrm{i}(G\ast|u|^{2})u,\\
u(0)=u_{0},
\end{cases}
\end{align}
where $0<\beta<1$ and $G$ is a real kernel. 
Similar equations are considered in \cite{Aloui2008a}, \cite{Aloui2008} and \cite{Aloui2013}, on bounded domains of $\mathbb{R}^{n}$ as well as on compact manifolds.
In order to apply the methods given by \eqref{eq: metodo afin simetrico}, we consider
the flow $\phi_{0}$ generated by the linear operator $L=\mathrm{i}\partial_{x}^{2}-(-\partial_{x}^{2})^{\beta}$,
and the flow $\phi_{1}(h,u)=\exp(\mathrm{i}h(|u|^{2}+G\ast|u|^{2}))u$ associated to
$\partial_{t}u=\mathrm{i}(|u|^{2}+G\ast|u|^{2})u$.
If $\rho=|u|^{2}$ and $\rho(x,t)=\sum_{\nu\in\mathbb{Z}}\hat{\rho}_{\nu}(t)\mathrm{e}^{\mathrm{i}\nu x}$, we have
\[
(G\ast|u|^{2})(x,t)=\sum_{\nu\in\mathbb{Z}}\hat{G}_{\nu}\hat{\rho}_{\nu}(t)\mathrm{e}^{\mathrm{i}\nu x}
\]
Both $\phi_{0}$, $\phi_{1}$ can be numerically solved using discrete Fourier transform as in the example above.
Using FFT, the computational cost of each evaluation is $O(\eta\log\eta)$, where $\eta$ is the number of point in the 
spatial discretisation.

In order to analyse the performance of the integrators proposed, we consider the exact solutions
$u(x,t)=r(t)\mathrm{e}^{\mathrm{i}(\nu_{0} x+\theta(t))}$, with $r(t)=r_{0}\mathrm{e}^{-|\nu_{0}|^{2\beta}t}$ and
\[
\theta(t)=-\nu_{0}^{2}\,t+\frac{1}{2}(1+\hat{G}_{0})r_{0}^{2}|\nu_{0}|^{-2\beta}
\left(1-\mathrm{e}^{-2|\nu_{0}|^{2\beta}t}\right)+\theta_{0}.
\]
Note that $u(.,t)$ has only one oscillation mode, and taking $\nu_{0}$ as the momentum of the wave as it is usual,
we can say that $u$ is a monokinetic wave.
As an example, we consider the Poisson kernel given by
\begin{align*}
G(x)=\frac{\sinh(\lambda)}{\cosh(\lambda)-\cos(x)},
\end{align*}
then $\hat{G}_{\nu}=\mathrm{e}^{-\lambda|\nu|}$.
In figure \ref{fig: Errores_globales_Schrodinger_disipativo}, absolute global errors and relative global errors
defined by
\begin{align*}
\mathcal{E}_{\mathrm{abs}}=\max_{0\le n\le [T/h]}\|u_{n}-U_{n}\|_{L^{2}},\quad
\mathcal{E}_{\mathrm{rel}}=\max_{0\le n\le [T/h]}\frac{\|u_{n}-U_{n}\|_{L^{2}}}{\|u_{n}\|_{L^{2}}},
\end{align*}
are shown, with $\beta=1/4$, $T=4$, $\lambda=1$,
initial condition $u_{0}=\mathrm{e}^{\mathrm{i}4x}$ and
methods varying from fourth to fourteenth order.
The number of points in the spatial discretisation is $\eta=31$ and the temporal steps
$h$ ranging from $0.01$ to $2$. Like in the example above the slopes coincide with the expected order up
to the point where the rounding error dominates the total error.

\begin{figure}[ht]
\begin{center}
\includegraphics[height=0.3\hsize]{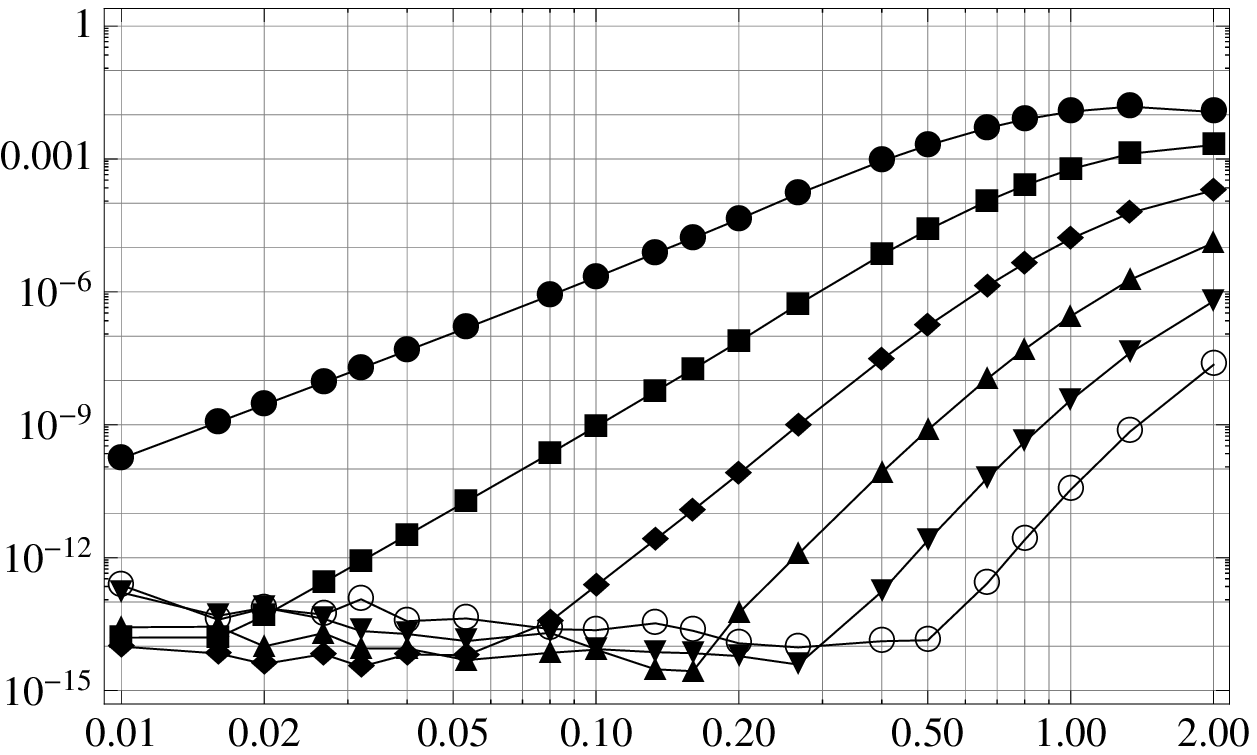} 
\quad
\includegraphics[height=0.3\hsize]{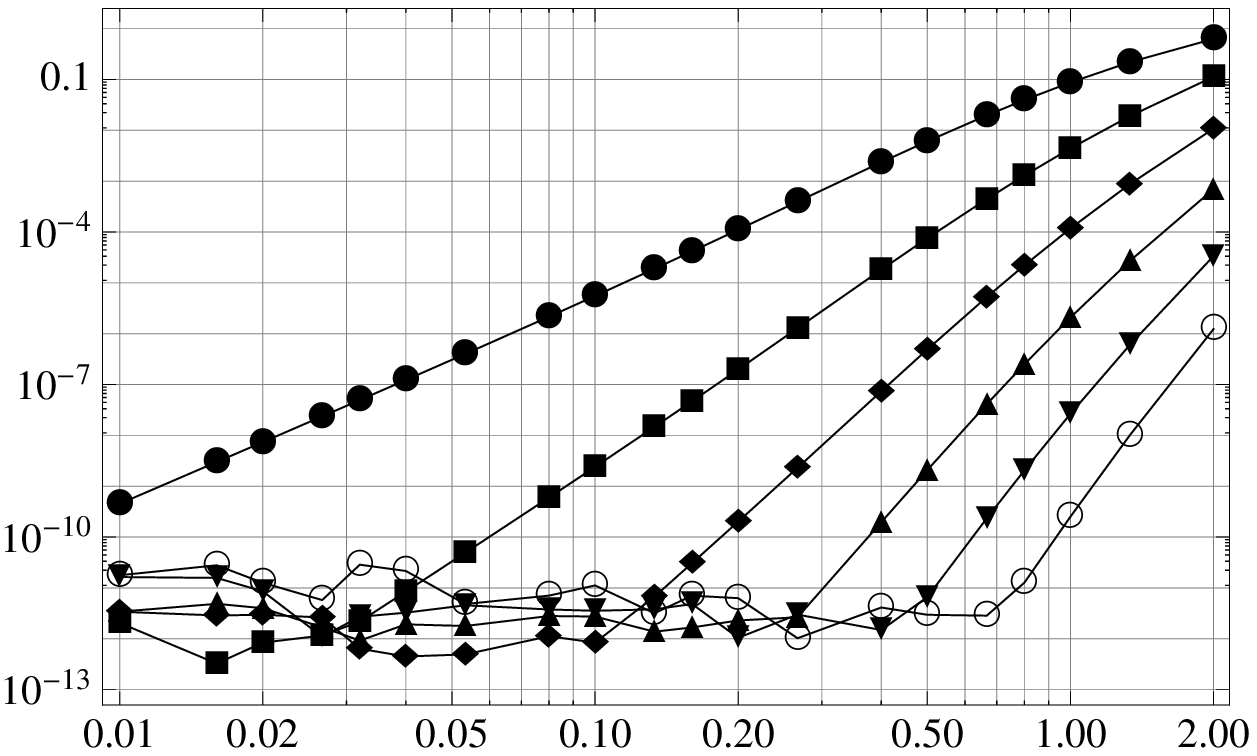} 
\caption{\label{fig: Errores_globales_Schrodinger_disipativo}
Global error vs. $h$ for $q=4,6,\ldots,14$, absolute error (left) and relative error (right)}
\end{center}
\end{figure}

\noindent
For $\nu_{0}=0$, it holds $u(x,t)=r_{0}\mathrm{e}^{\mathrm{i}2|r_{0}|^{2}t+\mathrm{i}\theta_0}$ which are time periodic solutions.
Multiplying \eqref{eq: Schroedinger cubica} by $\bar{u}$ and integrating by parts, we get
\begin{align*}
\frac{d}{dt}\|u\|_{L^{2}}^{2}=-2\|(-\partial_{x}^{2})^{\beta/2}u\|_{L^{2}}^{2}
=-2\mathop{\sum_{\nu\in\mathbb{Z}}}_{\nu\ne 0}|\nu|^{2\beta}|\hat{u}_{\nu}|^{2}
\le -2\|Pu\|_{L^{2}}^{2},
\end{align*}
where $Pu=\sum_{\nu\ne 0}\hat{u}_{\nu}\mathrm{e}^{\mathrm{i}\nu x}$ and
therefore the monokinetic solution with $\nu_{0}=0$ is the only time periodic solution.

It is easy to see that the flow $\phi$ of equation \eqref{eq: Schroedinger cubica} preserves parity, then
for any odd initial data $u_{0}$, $u(t)$ is an odd function and $u(t)=Pu(t)$ for $t>0$.
Therefore, it holds $d\|u\|_{L^{2}}^{2}/dt\le -2\|u\|_{L^{2}}^{2}$
and $\|u\|_{L^{2}}\le \mathrm{e}^{-t}\|u_{0}\|_{L^{2}}$.
We will test the numerical methods by verifying these properties. Consider the odd initial data $u_{0}(x)=\mathrm{e}^{\cos(2x)+\mathrm{i}\pi/6}\sin(5x)$,  in figure \ref{fig: evolucion_Schrodinger_disipativa} we show the numerical solution obtained with the eighth symmetric integrator with $\eta=255$ and $h=0.1$.
Since the higher the frequencies are, the stronger is the damping, $u$ asymptotically behaves like 
$a \mathrm{e}^{-t-it}\sin(x)$.
\begin{figure}[ht!]
\begin{center}
\includegraphics[height=0.175\hsize]{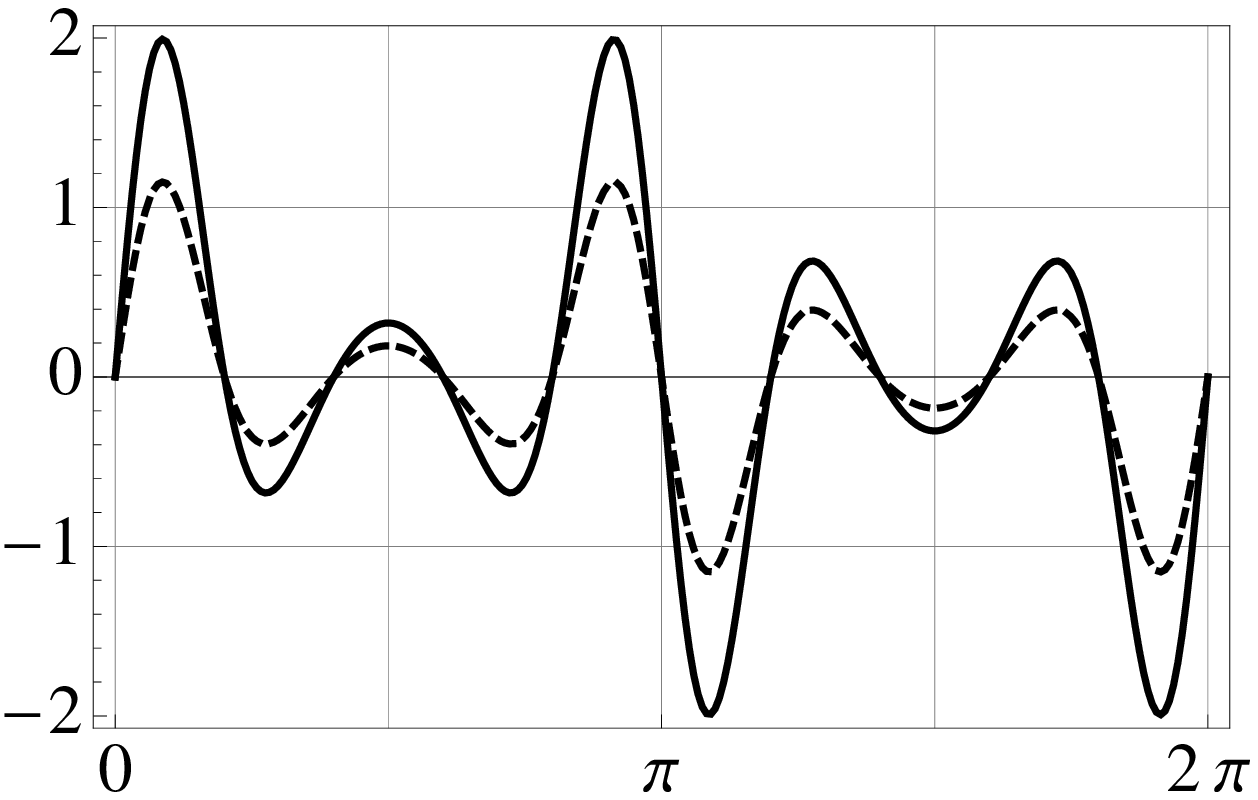} 
\quad
\includegraphics[height=0.175\hsize]{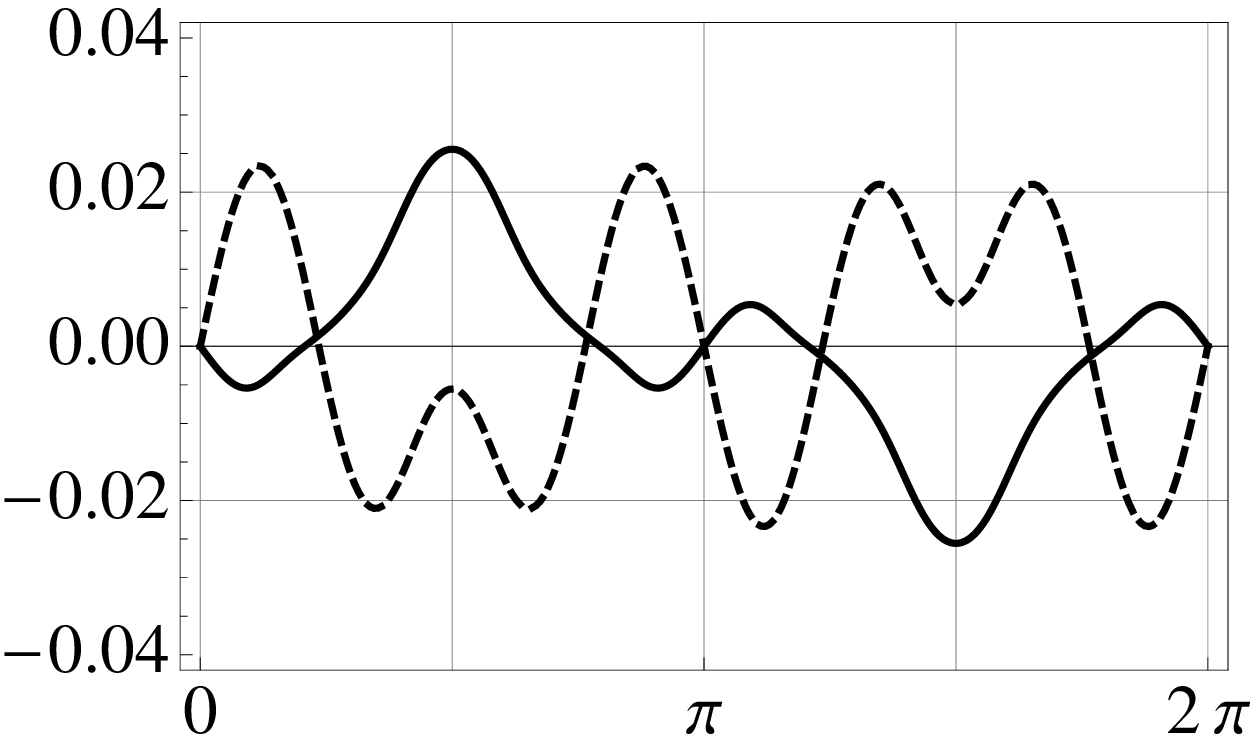} 
\quad
\includegraphics[height=0.175\hsize]{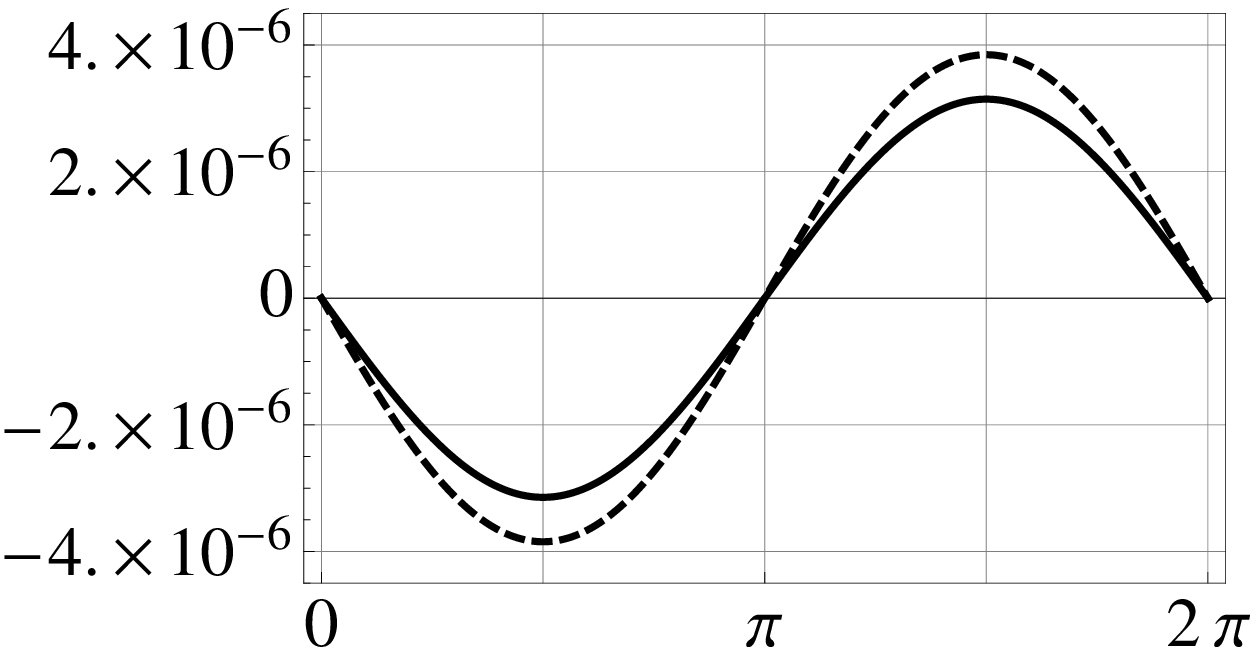} 
\caption{\label{fig: evolucion_Schrodinger_disipativa}
$\mathrm{Re}(\Phi(t,u_{0}))$ and $\mathrm{Im}(\Phi(t,u_{0}))$
for $t=0$ (left), $t=2$ (center) and $t=10$ (right)}
\end{center}
\end{figure}
In figure \ref{fig: impar}, it is shown the evolution of $\|u(.,t)\|_{L^{2}}/\|u_{0}\|_{L^{2}}$ in continuous line,
the function $\mathrm{e}^{-t}$ in dotted line and the asymptotic behaviour in dashed line.
\begin{figure}[ht!]
\centering
\subfigure[Odd solution]{
   \includegraphics[scale =0.5] {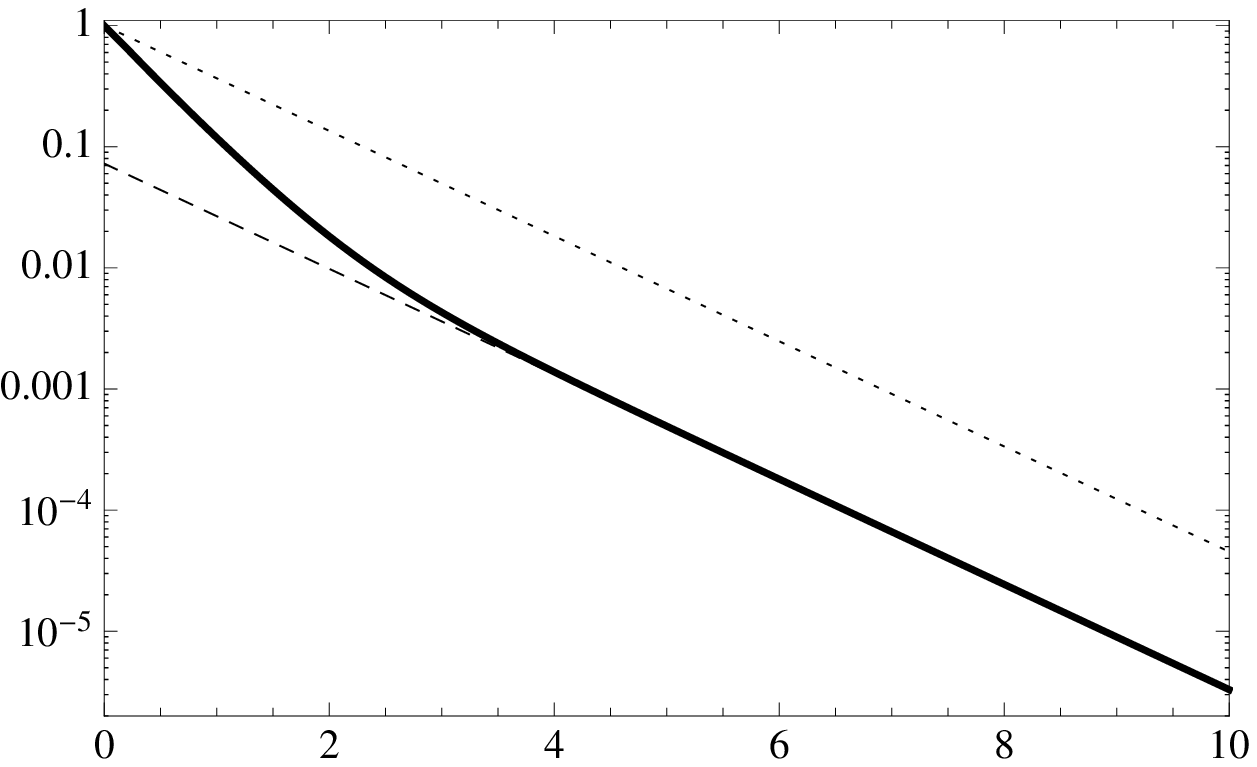}
   \label{fig: impar}}
 \subfigure[Even solution]{
   \includegraphics[scale =0.5] {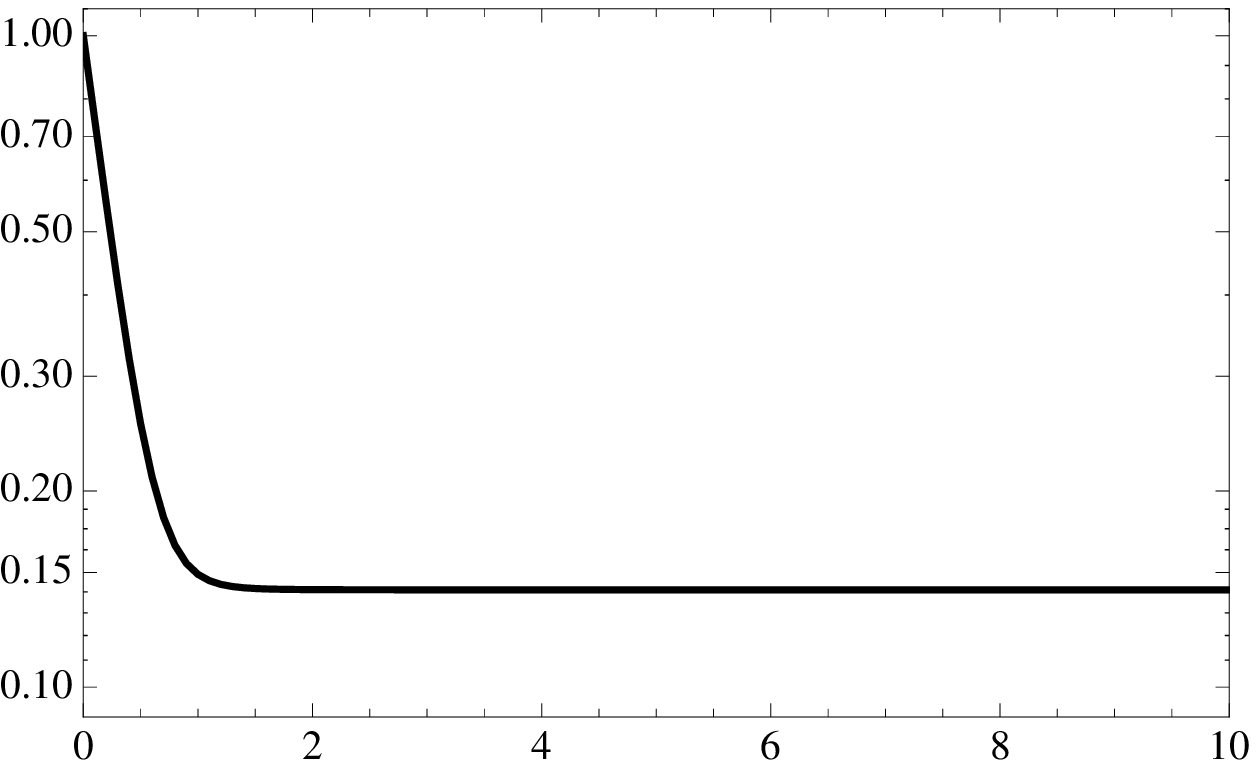}
   \label{fig: par}}
\label{fig: evolucion_norma}
\caption{Evolution of $\|u\|_{L^{2}}/\|u_{0}\|_{L^{2}}$ vs. time}
\end{figure}

We also consider a numerical computation with
$u_{0}(x)=\mathrm{e}^{\cos(2x)+\mathrm{i}\pi/6}(1-1.75\cos^{2}(5x))$ an even initial data.
Using the same integrator as in the odd case, we see that the solution converges to the periodic solution
$u(x,t)\sim a \mathrm{e}^{i2|a|^{2}t}$ as it is seen in figure \ref{fig: evolucion_Schrodinger_disipativa_par}.
In figure \ref{fig: par} it can be observed the fast stabilization of the norm.
This suggests that the periodic solutions are limit cycles of the dynamic given by the equation
\eqref{eq: Schroedinger cubica}.
\begin{figure}[ht!]
\begin{center}
\includegraphics[height=0.175\hsize]{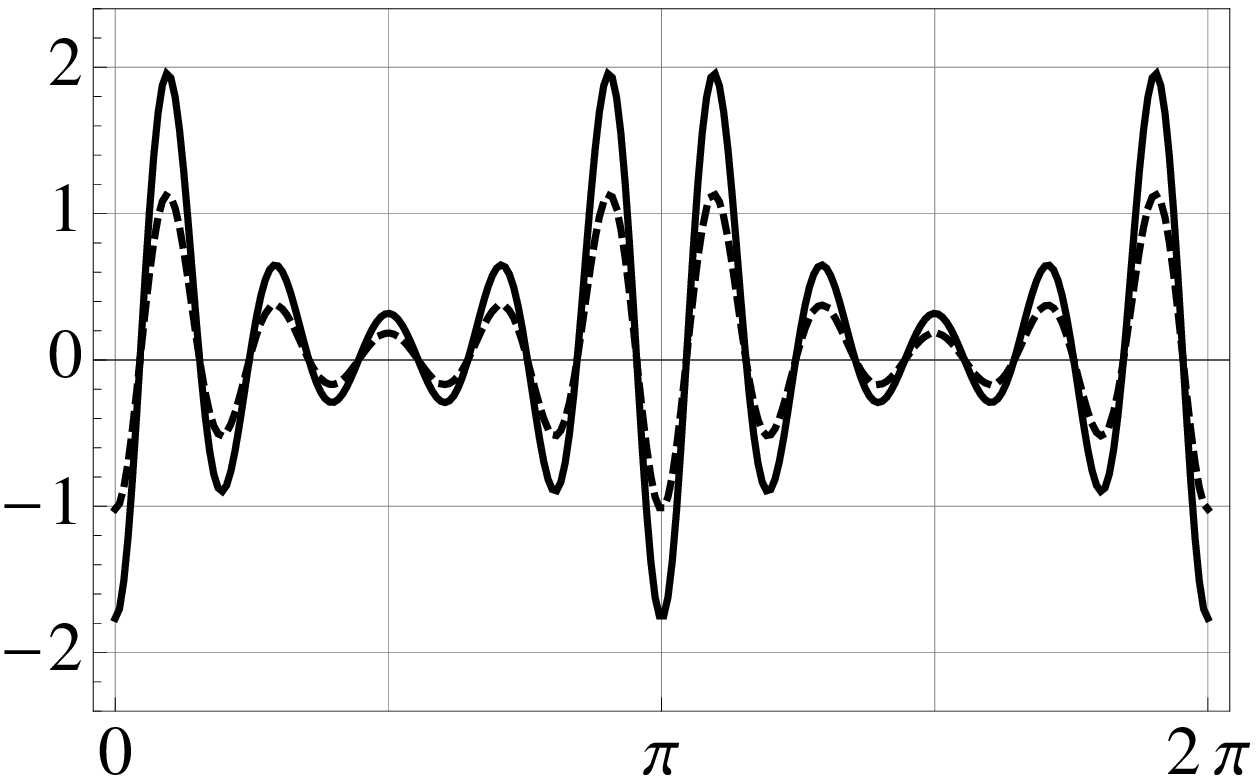} 
\quad
\includegraphics[height=0.175\hsize]{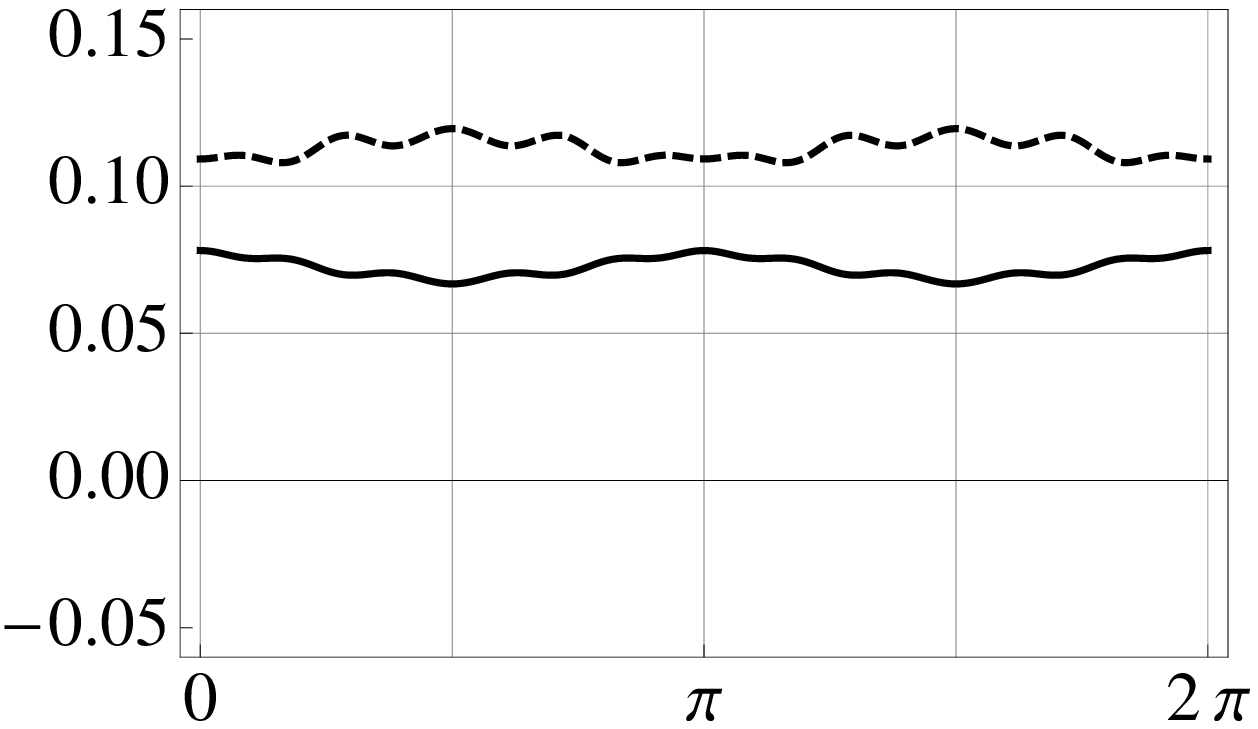} 
\quad
\includegraphics[height=0.175\hsize]{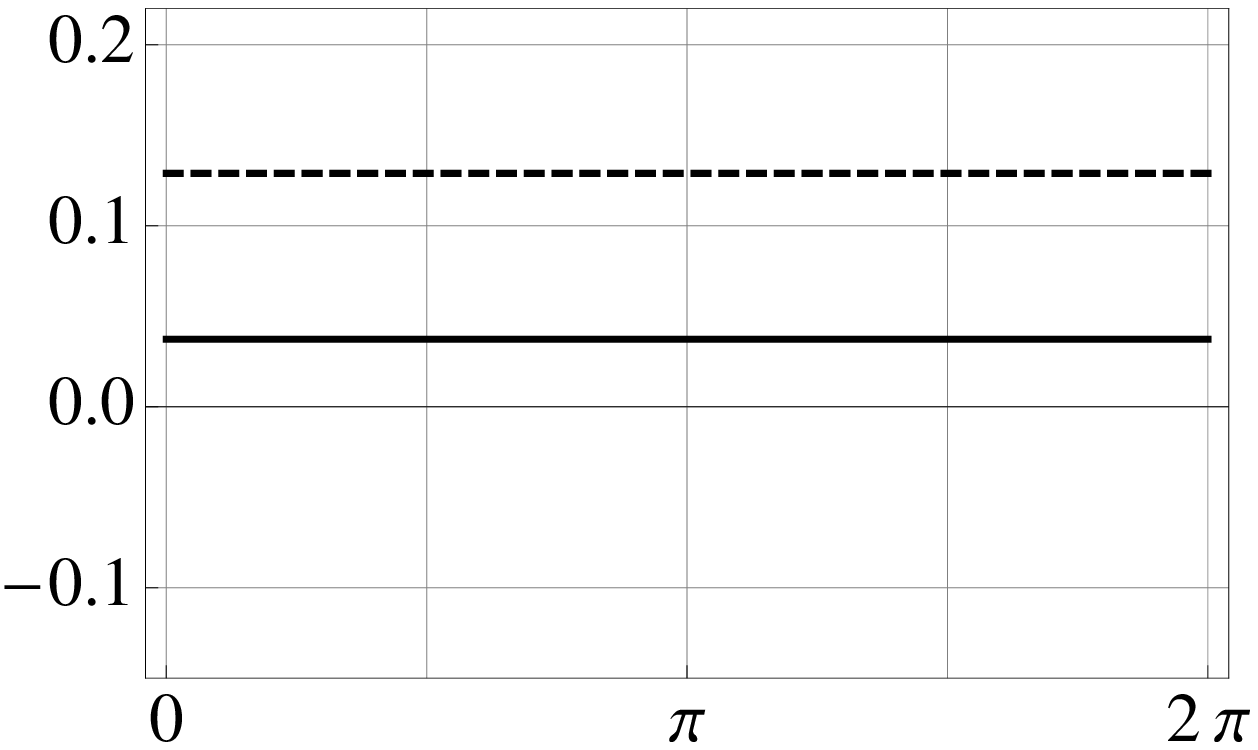} 
\caption{\label{fig: evolucion_Schrodinger_disipativa_par}
$\mathrm{Re}(\Phi(t,u_{0}))$ and $\mathrm{Im}(\Phi(t,u_{0}))$
for $t=0$ (left), $t=2$ (center) and $t=10$ (right)}
\end{center}
\end{figure}

\bibliographystyle{plain}
\bibliography{references}

\end{document}